\newtheorem{theorem}{Theorem}[section]
\newtheorem{proposition}[theorem]{Proposition}
\newtheorem{corollary}[theorem]{Corollary}
\newtheorem{remark}[theorem]{Remark}
\font\bigbf=cmbx10 scaled \magstep3
\begin{document}
	
	\title{\bigbf  Exploring the drive-by sensing power of bus fleet through active scheduling}

	\author{Zhuang Dai
		\quad Ke Han$\thanks{Corresponding author, e-mail: kehan@swjtu.edu.cn;}$\\\\
		\textit{\small Institute of System Science and Engineering, School of Transportation and Logistics,}\\
		\textit{\small Southwest Jiaotong University, Chengdu 611756, China}
	}
	
	\maketitle

	\begin{abstract}
		Vehicle-based mobile sensing (a.k.a drive-by sensing) is an important means of surveying urban environment by leveraging the mobility of public or private transport vehicles. Buses, for their extensive spatial coverage and reliable operations, have received much attention in drive-by sensing. Existing studies have focused on the assignment of sensors to a set of lines or buses with no operational intervention, which is typically formulated as set covering or subset selection problems. This paper aims to boost the sensing power of bus fleets through active scheduling, by allowing instrumented buses to circulate across multiple lines to deliver optimal sensing outcome. We consider a fleet consisting of instrumented and normal buses, and jointly optimize sensor assignment, bus dispatch, and intra- or inter-line relocations, with the objectives of maximizing sensing quality and minimizing operational costs, while serving all timetabled trips. By making general assumptions on the sensing utility function, we formulate the problem as a nonlinear integer program based on a time-expanded network. A batch scheduling algorithm is developed following linearization techniques to solve the problem efficiently, which is tested in a real-world case study in Chengdu, China. The results show that the proposed scheme can improve the sensing objective by 12.0\%-20.5\% (single-line scheduling) and 16.3\%-32.1\% (multi-line scheduling), respectively, while managing to save operational costs by 1.0\%. Importantly, to achieve the same level of sensing quality, we found that the sensor investment can be reduced by over 33\% when considering active bus scheduling. Comprehensive comparative and sensitivity analyses are presented to generate managerial insights and recommendations for practice.
	\end{abstract}

	\noindent {\it Keywords:} Drive-by sensing; Bus scheduling; Nonlinear integer programs; Multi-objective optimization

	\section{Introduction}\label{secIntro}
	
	With the rapid development of wireless and low-cost sensors, vehicle-based mobile sensing, also known as drive-by sensing (DS), has become an increasingly important means of surveying spatial-temporal urban environment by leveraging the mobility of public and private transport vehicles \citep{lee2010survey, ma2015opportunities, lane2008urban, liu2005mobility}. 
	DS has been widely adopted in various urban sensing scenarios such as air pollution sensing \citep{gryech2020moreair, ma2008air, song2020deep}, traffic state estimation \citep{du2014effective, li2008performance} and infrastructure health monitoring \citep{eriksson2008pothole, wang2014framework}. The types of host vehicles commonly seen in DS include taxis \citep{o2019quantifying, chen2017trajectory}, buses \citep{cruz2018coverage, kaivonen2020real}, and dedicated vehicle \citep{messier2018mapping}.
	
	The spatial-temporal mobility patterns of the host vehicles have a considerable impact on the sensing quality \citep{anjomshoaa2018city}. For taxis, despite the long operational hours and wide travel range, their spatial survey may be biased, driven by revenue-oriented operations \citep{o2019quantifying}. In contrast, unmanned aerial vehicles or dedicated vehicles have fully controllable routes and schedules \citep{moawad2021real, fan2021towards}, but their large-scale deployment is limited by the high deployment and maintenance costs. As a third option, DS based on buses (or other fixed-route public transport vehicles) not only requires low set-up and maintenance costs, but also enjoys wide spatial and temporal coverage with easy-to-predict vehicle trajectories \citep{wang2018maximizing}.

	Existing studies on bus-based DS have focused on (1) characterizing the spatial-temporal coverage \citep{cruz2018coverage}, and (2) assigning sensors to buses or lines to optimize the sensing quality \citep{kaivonen2020real, wang2018maximizing, james2012sensor}. The latter is usually formulated as a set covering problem \citep{james2012sensor} or a subset selection problem \citep{tonekaboni2020spatio, wang2018maximizing}. These studies are strategic in nature, while the potential of tactical or operational maneuver (e.g. active buses scheduling) has not been explored in the context of bus-based drive-by sensing. This paper addresses this gap by simultaneously optimizing sensor-line assignment, bus dispatch, and single- or multi-line bus relocation. The premise is that, by allowing active bus scheduling within or across lines, the sensors can be better circulated in space and time to achieve superior sensing quality through some centralized decision making procedure, which also guarantees that existing timetables are fulfilled by the fleet, and that additional operational costs associated with active scheduling (e.g. relocation cost) are minimized. 
	
	\begin{figure}[h!]
		\centering
		\includegraphics[width=\textwidth]{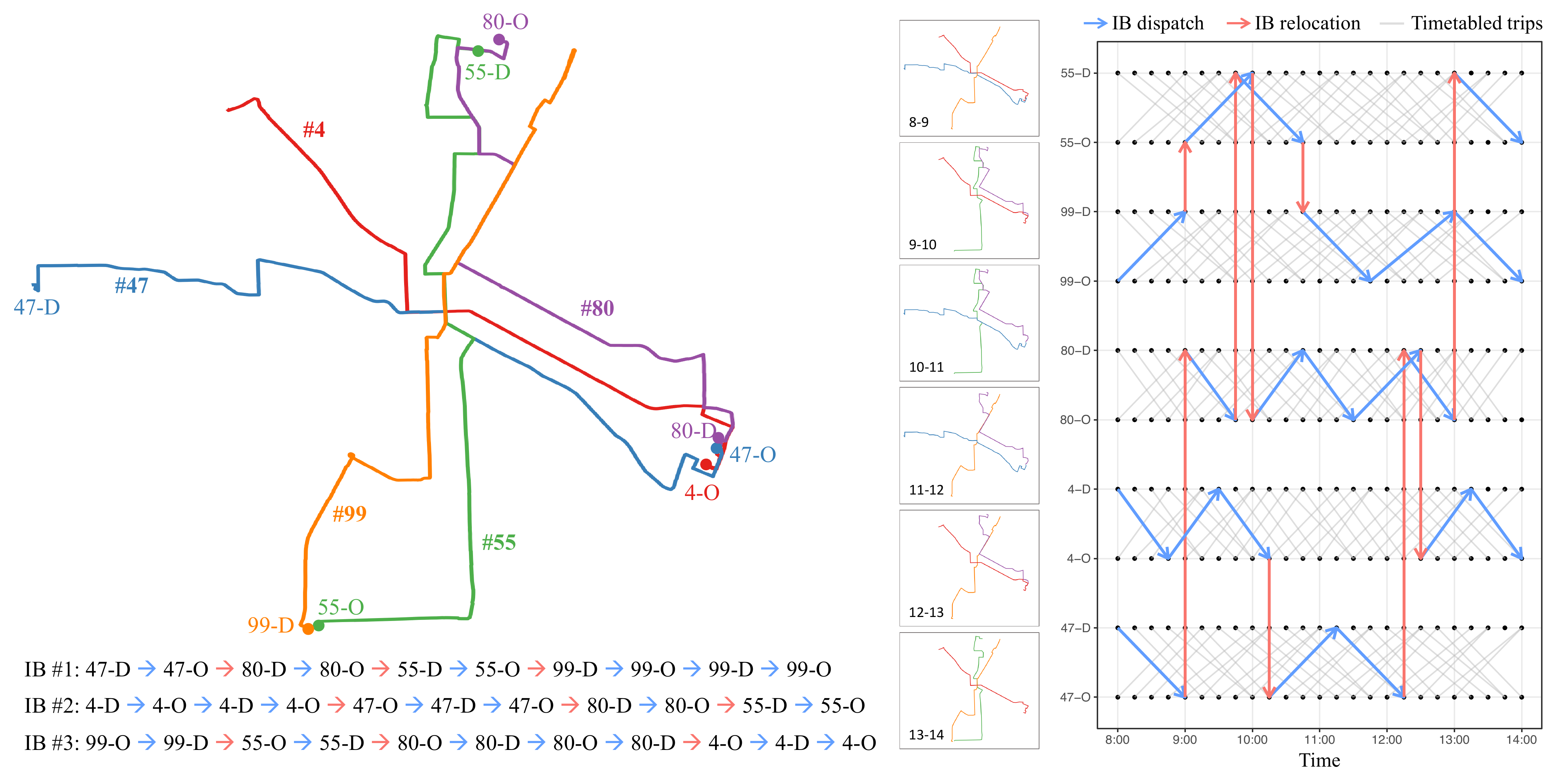}
		\caption{Illustrative example of bus schedules for 3 IBs.}
		\label{figintro}
	\end{figure}

	To gain some intuition into what this work aims to achieve, we illustrate a simple case of multi-line scheduling for instrumented buses (IBs). Figure \ref{figintro} shows a bus transit network with 5 lines. The diagram on the right shows the schedules of 3 IBs during 8:00-14:00. Through inter-line relocation, their spatial coverage is extended to 5 lines instead of 3 (which would be the case in the conventional sensor deployment problem). Moreover, the scheduling procedure ensures sufficient sensor coverage of different locations at different hours (middle of Figure \ref{figintro}). To optimize such a sophisticated mechanism while serving all timetabled trips (using normal buses) and minimizing the bus operational costs is a highly challenging issue.

	This paper addresses both single-line and multi-line scheduling problems for a mixed fleet consisting of normal and instrumented buses. The main contributions and impacts are as follows.
    \begin{itemize}		
		\item We are the first to explore the drive-by sensing potential of buses through active scheduling, advancing relevant literature from strategic sensor deployment to tactical/operational fleet maneuver.  
		
		\item Using time-expanded network, we formulate the multi-line drive-by sensing bus assignment \& scheduling problem as a nonlinear integer program, which subsumes both sensor-line assignment and multi-line bus scheduling problems. Realistic features such as service, wait, pull-out/in, and relocation arcs are included, as well as enforcement of existing timetables, to ensure the operational feasibility and practical relevance of the proposed schemes. 
		
		\item We devise a batch scheduling algorithm to solve this problem, accompanied by theoretic bounds of optimality. A battery of numerical tests show very good solution quality generated by the algorithm (with optimality gap below $3.1\%$), which outperforms Gurobi in terms of both solution quality and computational efficiency. 
		
		\item Based on real-world data in Chengdu, China, a comparative study of bus DS strategies with varying levels of operational intervention is undertaken, together with comprehensive performance evaluation and sensitivity analysis in a wide range of scenarios pertaining to sensor saturation and sensing granularity. These results not only highlight the significant sensing power unlocked by multi-line bus scheduling, but also generate insights regarding the applicability and cost-effectiveness of this approach. 
		
    \end{itemize}

    The proposed framework has the potential to increase the sensing power of bus fleet while saving operational costs; it could also considerably improve the cost-effectiveness of sensor investment. The methodological framework and managerial insights developed in this work is generalizable to a wide range of urban drive-by sensing scenarios (e.g. monitoring of air quality, traffic state, road surface condition, and heat island phenomena).

	The reminder of this paper is organized as follows. Section \ref{secLR} reviews literature related to the topic of this paper. Section \ref{secPD} articulates the model and formulation via the notion of time-expanded network. The solution algorithm is devised in Section \ref{secSA}. Section \ref{secCE} presents some computational  results and analyses. Section \ref{secConclude} provides some concluding remarks and recommendation for practice.

	\section{Related work}\label{secLR}
	
	Work related to this study is divided into three parts: bus-based urban drive-by sensing (DS), vehicle maneuvers in DS, and multi-line bus scheduling.
	
	\subsection{Bus-based drive-by sensing}
	
	Bus-based DS in urban areas is a highly efficient means of ubiquitous sensing with high reliability, low operational costs and wide spatial-temporal coverage \citep{dong2015mosaic, marjovi2015high}. \cite{cruz2018coverage} analyze the mobility patterns of over 5700 buses in Rio de Janeiro, and find that approximately 18\% of the fleet can cover at least 94\% of the streets served by buses. \cite{cruz2020per} further propose a metric for quantifying the spatial-temporal coverage of bus fleet in various application scenarios.
	
	Another line of research focuses on sensor allocation to maximize the sensing quality of bus-based DS. These models can all be characterized as subset selection problems, by identifying a subset of lines or buses to be instrumented \citep{gao2016mosaic, kaivonen2020real}. \cite{james2012sensor} solve the sensor deployment problem by selecting a subset of bus lines based on a chemical reaction optimization algorithm. \cite{ali2017coverage} design a greedy heuristic to install a limited number of sensors on the London bus fleet to maximize the total number of road segments in surface condition monitoring. \cite{wang2018maximizing} design an approximate algorithm to select the subset of buses based on real T-Drive trajectory dataset to maximize the spatial-temporal coverage. \cite{tonekaboni2020spatio}  devise a heuristic algorithm to solve the sensor-bus allocation problem in their analysis of the urban heat island phenomenon, considering the importance of different geographical regions.
	
	In summary, all these studies have considered the bus DS problem from the strategic viewpoint of sensor deployment, which is a non-trivial extension of the classical facility location problems \citep{wang2021emergency, rifki2020impact}:  Instead of geographical locations as candidates, the sensors are mounted onto moving hosts. But the distinction extends far beyond that: Because of the mobility of the hosts, the temporal dimension is explicitly introduced to the decision space, raising its dimension and complexity by a great margin. This paper addresses an even more sophisticated scenario where the hosts (buses) can be actively assigned and scheduled, to boost the sensing power of bus fleets.

	\subsection{Vehicle maneuvers in drive-by sensing}
	
	The majority of DS related research is based on opportunistic sensing \citep{lane2008urban, asprone2021vehicular}, meaning that other than sensor-vehicle pairing, no further maneuvers or intervention is introduced to influence the trajectories of the fleet. In contrast, \cite{chen2020pas} propose a prediction-based actuation system for ride-hailing fleet, where the drivers receive monetary incentives to execute orders that benefit the overall sensing quality. \cite{xu2019ilocus} design a similar incentivizing scheme for taxi drivers, such that the collective sensing profile of the fleet matches a prescribed target distribution. \cite{asprone2021vehicular} consider taxi fleets and realize their sensing goals by generating $\varepsilon$-perturbations of the shortest route between a given origin-destination pair via the enhanced $A^*$ algorithm, and choosing one with the maximum sensing gain. Other than public transport vehicles, dedicated sensing vehicles (DSV) have been considered for their controllability and flexibility. \cite{fan2021towards} consider DSV fleet that are fully controlled by a centralized platform, and optimizes the scheduling policy of DSVs to ensure a fine-grained spatio-temporal sensing coverage.
	
	In summary, literature on active vehicle maneuver in DS is relatively sparse, and unseen for bus fleets. The unique operational characteristics of buses (e.g. fixed routes and timetables) pose new challenges in DS as the coordinated sensing of the fleet is intertwined with sophisticated bus operations. The findings made in this paper could unlock the sensing potential of bus fleets.

	\subsection{Multi-line bus scheduling}

	The multi-line bus scheduling is an extension of the multi-depot vehicle scheduling problem (MDVSP). In the context of bus operations, the MDVSP finds a schedule to serve timetabled trips across multiple lines, by allowing buses to be relocated in an intra- or inter-line fashion. There is a large body of literature investigating the MDVSP. \cite{dauer2021variable} address the MDVSP with heterogeneous fleet and time windows through a time-space network, and propose variable fixing heuristics. \cite{wu2022multi} study the electric bus MDVSP, where buses have limited running range and require long recharging time. The authors adopt a multi-layer time-expanded network, and proposed a branch-and-cut extract solution algorithm. \cite{lu2022combined} adopt a similar multi-layer time-expanded network for combined passenger and parcel transportation, whose layers are specified for each vehicle. \cite{li2021mixed} develop an adaptive time-space-energy network to model the multi-depot vehicle location-routing-scheduling problem with mixed electric and diesel buses, where the location problem refers to the determination of bus refuel/recharge stations. The MDVSP also targets a diverse set of objectives, ranging from maximizing coordinated transfers \citep{ibarra2014integrated} and environmental equities \citep{zhou2020bi}, to minimizing power grid peak load \citep{wu2022multi} and minimizing duration of the longest route \citep{pacheco2013bi}. 
	
	In this work, the sensing objective is realized through the IBs, whose spatial-temporal distributions are jointly determined by bus-to-line assignment and multi-line scheduling. This requires a combination of the sensor deployment problem and the MDVSP in an efficient way, while addressing modeling features pertaining to drive-by sensing and bus operations.


	\section{Preliminaries and model formulation}\label{secPD}
	
	\subsection{Context and assumptions of the problem}
	We consider a multi-line bus transit system with a certain number of mobile sensors installed on a portion of its fleet. It is natural to assume that, once installed, a sensor cannot be removed from the bus. With normal (NBs) and instrumented buses (IBs), the operator assigns buses to lines followed by intra-line and inter-line scheduling to achieve the following goals:
	\begin{itemize}
		\item[(i)] To fulfill existing timetables of all bus lines, thereby serving passenger demands at no cost of the level of service;
		\item[(ii)] To minimize operational costs and fleet size, where costs are incurred during bus pull-out (-in) from (to) the depot, service (to serve timetabled trips), and relocation  processes, and the fleet size should be kept minimum through inter-line coordination;
		\item[(iii)] To optimize spatial-temporal sensing quality, which is defined via a notion of sensing score on discretized space-time domain.
	\end{itemize}

	We consider a multi-line drive-by sensing bus assignment \& scheduling problem, which jointly determines assignment of buses to lines and trip chains (including their relocation processes) of NBs and IBs, to maximize the sensing quality, minimize operational costs, while serving all timetabled trips. We make the following assumptions:
	\begin{itemize}
		\item[(1)]  All IBs and NBs in the system may be relocated to a different terminal upon completion of a service trip, and such a relocation process incurs certain operational cost.  
		
		\item[(2)]  The service and relocation times of buses between terminals are known and deterministic, although they are allowed to change dynamically throughout the day. The buses move along their assigned route at constant speeds.  
		
		\item[(3)] The NBs and IBs are homogeneous in capacity, travel speed, and operational cost.
		
	\end{itemize}

	\subsection{Representation of the bus transit system}\label{subsecTEN}
		Table \ref{tab1} lists key parameters and variables used in the model. 
	
	\begin{longtable}{rl}
		\caption{Mathematical symbols}	
		\label{tab1} \\
			\hline
			\multicolumn{2}{l}{Indices and sets}     \\
			\hline
			$t$               & The discretized time index for the time-expanded network, $t\in T$
			\\
			$k$               & The discretized time index for evaluating the sensing quality, $k \in T^s$
			\\
			$\Delta_k$       & The temporal sensing granularity, which is the step size of the discrete times $k$
 			\\
			$s,\,e$                    & The start and end time of the planning horizon
			\\
			$i,\,j$                    & The bus terminal index, $i,\,j\in \Theta$
			\\
			$g$                        & The grid index in the target area, $g\in G$
			\\
			$b$                        & The bus index, $b\in B$
			\\
			$\Theta$                    & The set of depots
			\\
			$G$                        & The set of grids, $G=\{g_1,\,g_2,\,\ldots,g_n\}$
			\\
			$B$                        & The set of all buses
			\\
			$B_{\text{IB}}$                        & The set of instrumented buses (IBs)
			\\
			$B_{\text{NB}}$                        & The set of normal buses (NBs)
			\\
			$N$ & The set of timed nodes in the time-expanded network
			\\
			$A$                   & The set of arcs in the time-expanded network
			\\
			$A_d$                   & The set of bus service arcs
			\\
			$A_r$                   & The set of bus relocation arcs
			\\
			$A_w$                   & The set of bus wait arcs
			\\
			$A_p$                   & The set of pull-in/pull-out arcs
			\\
			\hline
			\multicolumn{2}{l}{Parameters}                                                                                                                    \\
			\midrule
			$M$                 & The maximum number of IBs
			\\
			$c_{ij}^{t\bar t}$     & The bus travel cost on arc $\big((i,t),\,(j,\bar t)\big)\in A$ 
			\\
			$\mu_{gk}$      & The spatial-temporal sensing weight for timed grid $(g,k)$
			\\
			$\beta_{ijg}^{t\bar t k}$   & Equals $1$ if $(g,\,k)$ is covered by arc $\big((i,j),\,(j,\bar t)\big)$, and $0$ otherwise                                                                                                                  \\
			$\delta$                   & The weighting factor of objectives 
			\\
			\hline
			\multicolumn{2}{l}{Intermediate decision variables}                                                                                                                 \\
			\hline
			$z_{ijb}^{t\bar t}$       & Equals $1$ if bus $b$ is an IB and traverses arc $\big((i,t),\,(j,\bar t)\big) \in A$, and 0 otherwise \\
			$q_{gk}$                   & The IB sensing times for the timed grid $(g,\,k)$
			\\
			$r_{gk}$              & The effective sensing times for the timed grid $(g,\,k)$                                                                                                                                              \\
			\hline
			\multicolumn{2}{l}{Decision variables}                                                                                                                              \\
			\hline
			$x_{b}$ & Equals $1$ if bus $b$ is an IB, and $0$ otherwise
			\\
			$v_{bi}$ & Equals $1$ if bus $b$ is assigned to depot $i$, and $0$ otherwise
			\\
			$y_{ijb}^{t\overline{t}}$ & Equals $1$ if bus $b$ traversed arc $\big((i, t), (j, \overline{t})\big)\in A$, and $0$ otherwise                                                                                                                                                    \\\hline
	\end{longtable}

	 \begin{figure}[h!]
    		\centering
    		\includegraphics[width=\textwidth]{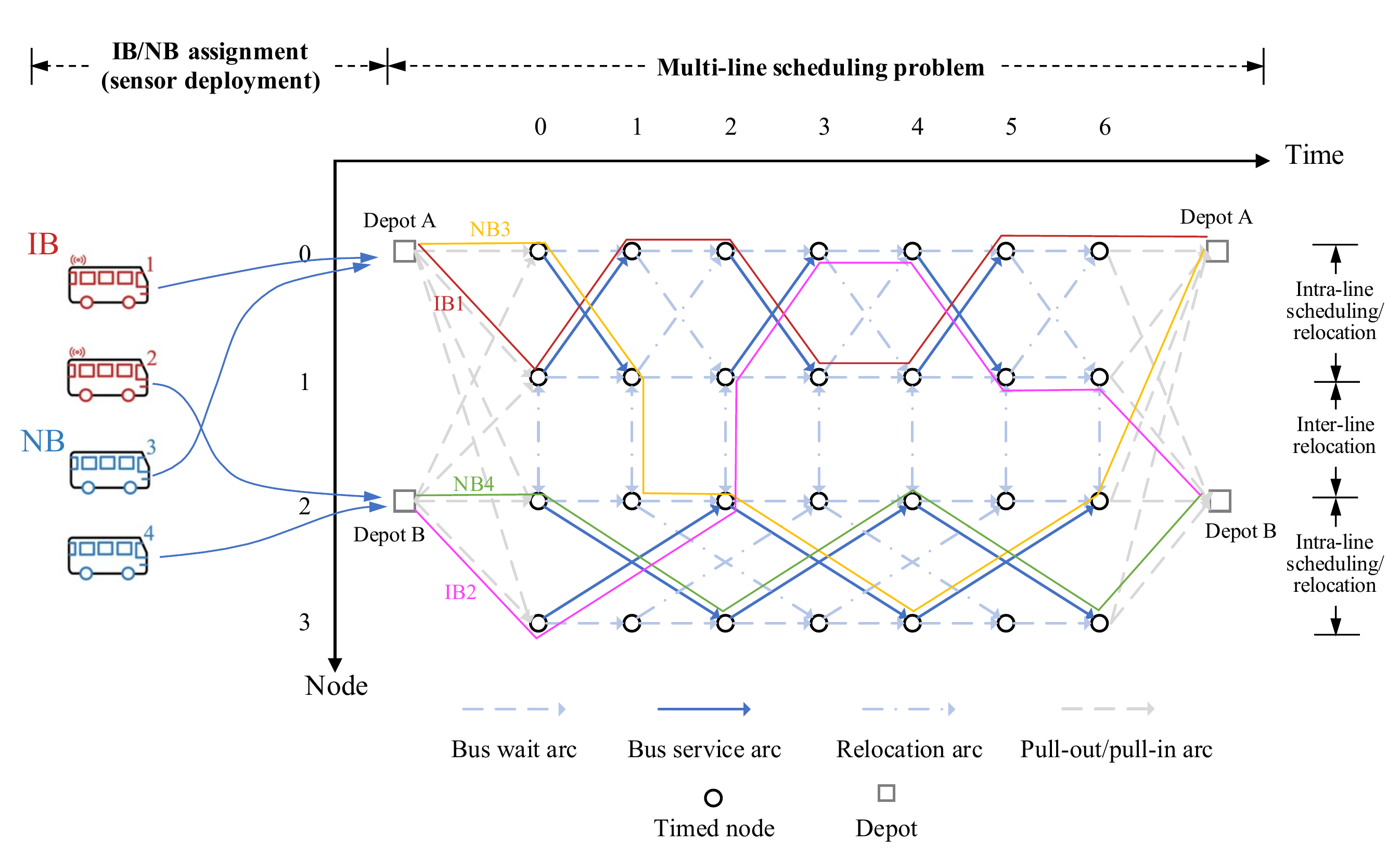}
    		\caption{Illustration of the time-expanded network representation of bus assignment \& scheduling. The colored lines represent a feasible solution of the IB/NB assignment and multi-line scheduling problem, which ensures that all timetabled trips are served, and buses are pulled from/returned to their assigned depots. The solution also has two instances of inter-line bus relocations (NB3 \& IB2).}
    		\label{fig1}
	\end{figure}

	We consider a multi-line bus transit system with IBs and NBs, and study it as a time-expanded network. Fig. \ref{fig1} illustrates such a network with two lines and four terminals, of which only two  (0 and 2) have the functionality of dispatching and holding buses. The network contains timed nodes $(i,\,t)\in N$ representing terminal status across discretized times, and arcs $\big((i,t),\,(j,\bar t)\big)\in A$ denoting possible bus activities. We design four types of arcs for modeling bus-related activities.
	
	{\bf Bus service arcs $A_d\subset A$}: these arcs encode predefined timetabled bus trips. For example, a dispatch arc $\big((i,t),\,(j,\bar t)\big)\in A_d$ is present when there is a scheduled bus service from terminals $i$ to $j$ with start time $t$ and end time $\bar t$. Such service arcs must be covered by an IB or NB. The arc cost is the travel cost for a bus moving from terminals $i$ to  $j$.
	
	{\bf Bus relocation arcs $A_r\subset A$}: these arcs are drawn between terminals with no scheduled bus services.
	IBs traverse these arcs to be allocated to a different line in the interest of drive-by sensing. NBs utilize these arcs to be repositioned to balance the supply in view of IB relocation. The relocation arc cost is also represented by the bus travel cost. As we assume buses must return to their initial depots at the end of the planning horizon, one instance of relocation would require at least a second one to return the bus. Thus, bus relocation costs are to be minimized, among other objectives.

	{\bf Bus wait arcs $A_w\subset A$}: these arcs allow NBs and IBs to wait at terminals for certain time, to facilitate the simultaneous scheduling of NBs and IBs. We impose zero costs on these wait arcs.
	
	{\bf Pull-out/pull-in arcs $A_p\subset A$}: A pull-out arc represents bus movement from the initial depot to its first assignment, connecting the depot node to a zero-timed node (terminal) in the time-expanded network. 
	Pull-out arcs exist between the initial depot to all the multi-line terminals, so that buses are not bound up with its positioned depot but can be dispatched to other lines at the very beginning. The cost of pull-out arc consists of the fixed cost of a bus and the travel cost for a bus moving from depot to its first assignment. A pull-in arc connects the last timed node to the depot, so that buses can be returned to their initial stop at the end of the planning horizon. The cost of pull-in arc is the bus travel cost.
	
	Based on the above time-expanded network, we are able to model and optimize NB and IB operations with bus dispatching, relocation and waiting processes over time.

	\subsection{Definition of drive-by sensing objectives}

	The quantification of sensing quality is essential to the proposed problem. We mesh the target area into $n$ grids indexed as $g\in G=\{g_1,\,g_2,\,\ldots,g_n\}$. 
	The shape and diameter of these grids can be user defined according to the sensing granularity at the data request end. We also introduce discrete times $k\in T^s$ for quantifying sensing quality, which is different from the time index $t$ in the time-expanded network for bus operations. 
	
	The total number of IBs traveling through grid $g$ during time $k$ is
	$$
	q_{gk}=\sum_{b\in B}\sum_{\big((i,t),\,(j, \bar t)\big)\in A}x_b \, y_{ijb}^{t\bar t}\, \beta_{ijg}^{t\bar t k}
	$$
	where $\beta_{ijg}^{t\bar t k}$, $x_b$ and $y_{ijb}^{t \bar t}$ are explained in Table \ref{tab1}. For every grid-time pair $(g,k)$, we assign a spatial-temporal sensing weight of $\mu_{gk}$ to indicate its relative priority, which can be tailored to accommodate  various sensing scenarios and requirements. The weights satisfy $\sum_{g \in G}\sum_{k \in T^s}\mu_{gk}=1$. A straightforward objective would be:
	$$
	\sum_{g\in G}\sum_{k\in T^s}\mu_{gk}q_{gk}
	$$
 
	\noindent However, optimizing the above quantity could lead to undesirable outcome where the coverage is overly concentrated in areas with high weights. To avoid this case, we introduce the diminishing marginal sensing effect by defining the \textit{effective sensing times} $f(q_{gk})$, where $f(\cdot)$ satisfies: 
	\begin{equation}\label{fdef}
	f(0)=0,\qquad f'(q_{gk})>0,\qquad f''(q_{gk})<0
	\end{equation}
 
	\noindent In prose, as the coverage $q_{gk}$ accumulates, the effective sensing times increase as well, but with diminishing marginal gain. This feature is necessary as it discourages excessive coverage of the same grid-time pair $(g,k)$, thereby promoting a more balanced distribution in space and time of the coverage. This leads to the definition of \textit{sensing score} $\Phi$ as a performance indicator of the sensing quality:  
	\begin{equation}\label{STAESTeqn}
		\text{Sensing Score:}~~\Phi = \sum_{g \in G}\sum_{k \in T^s}\mu_{gk}f(q_{gk})
	\end{equation}
	\begin{remark}
	In view of \eqref{fdef}, one choice of the effective sensing times function is $f(q)=q^{0.5}$, which will be used in this paper. Obviously, the form of $f(\cdot)$ depends on the subject of sensing, such as its spatial-temporal distribution and volatility (e.g. air quality or road surface roughness). It also depends on the intended purpose of drive-by sensing (e.g. information collection or emergency response). Another important parameter is the step size $\Delta_k$ of the discrete time $k$: given $f(\cdot)$, larger $\Delta_k$ implies lower requirement on the sensing frequency or data quantity. For this reason, we coin $\Delta_k$ the {\it temporal sensing granularity}. We will run a test on different $\Delta_k$'s later in Section \ref{subsubsecimpact} to understand the performance of the proposed scheme in different sensing scenarios. 
	\end{remark}

	\subsection{Multi-line drive-by sensing bus assignment \& scheduling problem}
	
The problem of interest is analyzed through a nonlinear integer programming model as follows.

	\begin{equation}\label{FP1}
		\min z=\sum_{b\in B}\sum_{\big((i,t),\,(j,\bar t)\big)\in A}c_{ij}^{t\bar t}\,y_{ijb}^{t\bar t} -\delta \cdot \sum_{g\in G}\sum_{k\in T^s}\mu_{gk} f(q_{gk})
	\end{equation}
	\begin{eqnarray}
		\label{FP2}
		\sum_{i\in \Theta}v_{bi}\leq 1 & & \forall b\in B
		\\
		\label{FP3}
		\sum_{\big((i,s),\,(j,\bar t)\big)\in A_p} y_{ijb}^{s\bar t} \leq v_{bi} & & \forall b\in B, i \in \Theta
		\\
		\label{EQ1}
		\sum_{\big((i,s),\,(j,\bar t)\big)\in A_p} y_{ijb}^{s\bar t} -\sum_{\big((j,\bar t), (i,e)\big)\in A_p} y_{jib}^{\bar te}=0 & & \forall b\in B, i \in \Theta
		\\
		\label{FP4}
		\sum_{\big((j,\bar t),\,(i,t)\big)\in A}y_{jib}^{\bar t t}-\sum_{\big(i,t),\,(j,\bar t)\big)\in A}y_{ijb}^{t\bar t}=0\ & &\forall b\in B, (i,\,t)\in N
		\\
		\label{FP5}
		y_{jib}^{\bar t t}+y_{ijb}^{t\bar t}\leq 1 & & \forall b\in B,\big((i,t),\,(j,\bar t)\big)\in A_r
		\\
		\label{FP6}
		\sum_{b\in B}y_{ijb}^{t\bar t}\geq 1& & \forall \big((i,t),\,(j,\bar t)\big)\in A_d
		\\
		\label{FP7}
		\sum_{b\in B}x_b\leq M & & 
		\\
		\label{FP8}
		q_{gk}=\sum_{b\in B}\sum_{\big((i,t),\,(j,\bar t)\big)\in A}x_b\,y_{ijb}^{t\bar t}\,\beta_{ijg}^{t\bar t k} & & \forall g\in G,\,k\in T^s
		\\
		\label{FP9}
		x_b\in\{0,\,1\} & &\forall b\in B
		\\
		\label{FP10}
		v_{bi}\in\{0,\,1\} & &\forall b\in B, i\in \Theta
		\\
		\label{FP11}
		y_{ijb}^{t\bar t}\in\{0,\,1\} & & \forall b\in B,\big((i,t),\,(j,\bar t)\big)\in A
	\end{eqnarray}
	
	The objective function \eqref{FP1} aims to minimize total operational costs of NBs and IBs, while maximizing the sensing score $\Phi$, by introducing a weighting factor $\delta$ to indicate their relative importance. The parameter $\delta$ depends on the decision maker's preference towards both objectives, and is analyzed via a trade-off analysis presented in Section \ref{subsecPD}.
	Constraint \eqref{FP2} ensures that a bus is assigned to at most one depot.  
	Constraint \eqref{FP3} states that buses can only dispatch from their assigned depot. 
	Constraint \eqref{EQ1} guarantees that buses are pulled out and pulled in from the same depot. Such setting ensures that dispatched buses return to the same initial depot at the end of planning horizon, and therefore the balanced fleet management that is usually required by operators. 
	Constraint \eqref{FP4} is the flow balance constraint. Such constraint enables NBs and IBs to find feasible path on the time-expanded network. 
	Constraint \eqref{FP5} eliminates routing sub-tours for same terminal bus relocation across two lines. 
	Constraint \eqref{FP6} guarantees that a timetabled trip is covered by at least one bus, either a NB or an IB.  
	Constraint \eqref{FP7} enforces a maximum IB fleet size of $M$. 
	Constraint \eqref{FP8} calculates the IB sensing times for a grid-time pair $(g,k)$.
	Constraints \eqref{FP9}-\eqref{FP11} defines binary decision variables of the problem.

	\subsection{Model linearization}\label{subsecML}
	
	In this section, we apply linearization techniques to the proposed model. 
	Firstly, we linearize the multiplication $x_b\,y_{ijb}^{t\bar t}$ in \eqref{FP8}, by introducing $z_{ijb}^{t\bar t}=x_b\,y_{ijb}^{t\bar t}$, which is equivalent to the following set of equations:
	\begin{eqnarray}
		\label{FP12}
		z_{ijb}^{t\bar t}\leq x_b & & \forall b\in B, \big((i,t),(j,\bar t)\big)\in A
		\\
		\label{FP13}
		z_{ijb}^{t\bar t}\leq y_{ijb}^{t\bar t} & & \forall b\in B,  \big((i,t),(j,\bar t)\big)\in A
		\\
		\label{FP14}
		z_{ijb}^{t\bar t}\geq x_b+y_{ijb}^{t\bar t} -1 & & \forall b\in B,  \big((i,t),(j,\bar t)\big)\in A
		\\
		\label{FP15}
		z_{ijb}^{t\bar t}\geq 0 & & \forall b\in B, \big((i,t),(j,\bar t)\big)\in A
	\end{eqnarray}
	
	Secondly, we linearize the function $f(q)=q^{0.5}$ in \eqref{FP1} via piecewise affine approximation $\hat f(\cdot)$ (see Fig. \ref{linearization} for an example):
	\begin{equation}\label{hatfdef}
		r_{gk}=\hat f(q_{gk})=m_l q_{gk}+ c_l,\qquad l=1,\ldots,L
	\end{equation}
	
	\noindent Instead of introducing additional binary variables to indicate which line segment is binding, we simply impose the following linear inequalities:	
	\begin{equation}\label{linear2}
		r_{gk} \leq m_l\cdot q_{gk}+c_l \qquad \forall l=1,..,L
	\end{equation}
	\begin{proposition}
		The piecewise affine approximation \eqref{hatfdef}, in case of the objective \eqref{FP1}, is equivalent to the set of linear inequalities \eqref{linear2}.
	\end{proposition}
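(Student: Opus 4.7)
The plan is to combine the concavity of $f$ with the monotonicity of the objective in $r_{gk}$; three short observations suffice. First, since $f''<0$ from \eqref{fdef}, $f$ is strictly concave, so the secants used to build $\hat f$ have strictly decreasing slopes, which makes $\hat f$ itself a concave piecewise linear function. Any such function admits the standard lower-envelope representation
\begin{equation*}
\hat f(q)\;=\;\min_{l=1,\ldots,L}\bigl(m_l q+c_l\bigr),
\end{equation*}
as can be checked directly: for $q$ in the $i$-th piece, continuity at the breakpoints together with the slope ordering $m_1>\cdots>m_L$ forces $m_j q+c_j\geq m_i q+c_i=\hat f(q)$ for every $j\neq i$.

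Second, I would point out that the inequality system \eqref{linear2} encodes precisely $r_{gk}\leq \min_l(m_l q_{gk}+c_l)=\hat f(q_{gk})$; it relaxes the equality \eqref{hatfdef} to a one-sided upper bound without affecting the feasible region for $q_{gk}$ or any other variable in the model.

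The equality is then recovered from the structure of the objective. After substituting $r_{gk}$ for $f(q_{gk})$ in \eqref{FP1}, the variable $r_{gk}$ carries the coefficient $-\delta\mu_{gk}\leq 0$, so minimization pushes each $r_{gk}$ up to its maximal feasible value. By the first step this maximum equals $\hat f(q_{gk})$, giving $r_{gk}^\star=\hat f(q_{gk}^\star)$ at every optimum and hence identical optimal solutions and objective values under \eqref{hatfdef} and under \eqref{linear2}.

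The main (and really only) point to flag in the write-up is why explicit binary switching variables, which are usually required to select the active piece of a piecewise affine function inside a MILP, can be dispensed with here. The lower-envelope identity requires concavity of $f$ (and hence of $\hat f$), and the sign condition $-\delta\mu_{gk}\leq 0$ aligns the objective with maximization of $r_{gk}$; removing either ingredient would break the equivalence, so it is worth naming both assumptions explicitly even though each step is itself short.
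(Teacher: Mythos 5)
Your proposal is correct and follows essentially the same argument as the paper: the inequalities \eqref{linear2} describe the region below $\hat f$, and since the objective places a nonpositive coefficient $-\delta\mu_{gk}$ on $r_{gk}$, optimization drives $r_{gk}$ up to $\hat f(q_{gk})$. Your write-up merely makes explicit (via concavity and the lower-envelope identity) why the intersection of half-planes coincides with the hypograph of $\hat f$, a point the paper's proof states without elaboration.
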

	\begin{proof}
		The feasibility region described by \eqref{linear2} is the area beneath the piecewise affine approximation \eqref{linear2} (shown as the blue curve in Fig. \ref{linearization}). Since the second term of the objective \eqref{FP1} maximizes the weighted sum of $f(q_{gk})$, which is now approximated by $\hat f(q_{gk})$, this is equivalent to enforcing the largest value of $r_{gk}$ within the aforementioned area, which is precisely $\hat f(q_{gk})$.
	\end{proof}
	
	\noindent As an example, a three-segment approximation of $f(q)=q^{0.5}$ is illustrated in Fig. \ref{linearization}, where the piecewise affine function is given as:
	\begin{equation}\label{example}
		r_{gk} =\hat f(q_{gk})= \left \{\begin{array}{lll}
			q_{gk}                  && 0\leq q_{gk} < 1\\
			0.366\cdot q_{gk}+0.634 && 1\leq q_{gk} < 3\\
			1.732                   && q_{gk} \geq 3
		\end{array}\right.
	\end{equation}
	
	\begin{figure}[h!]
		\centering
		\includegraphics[width=.5\textwidth]{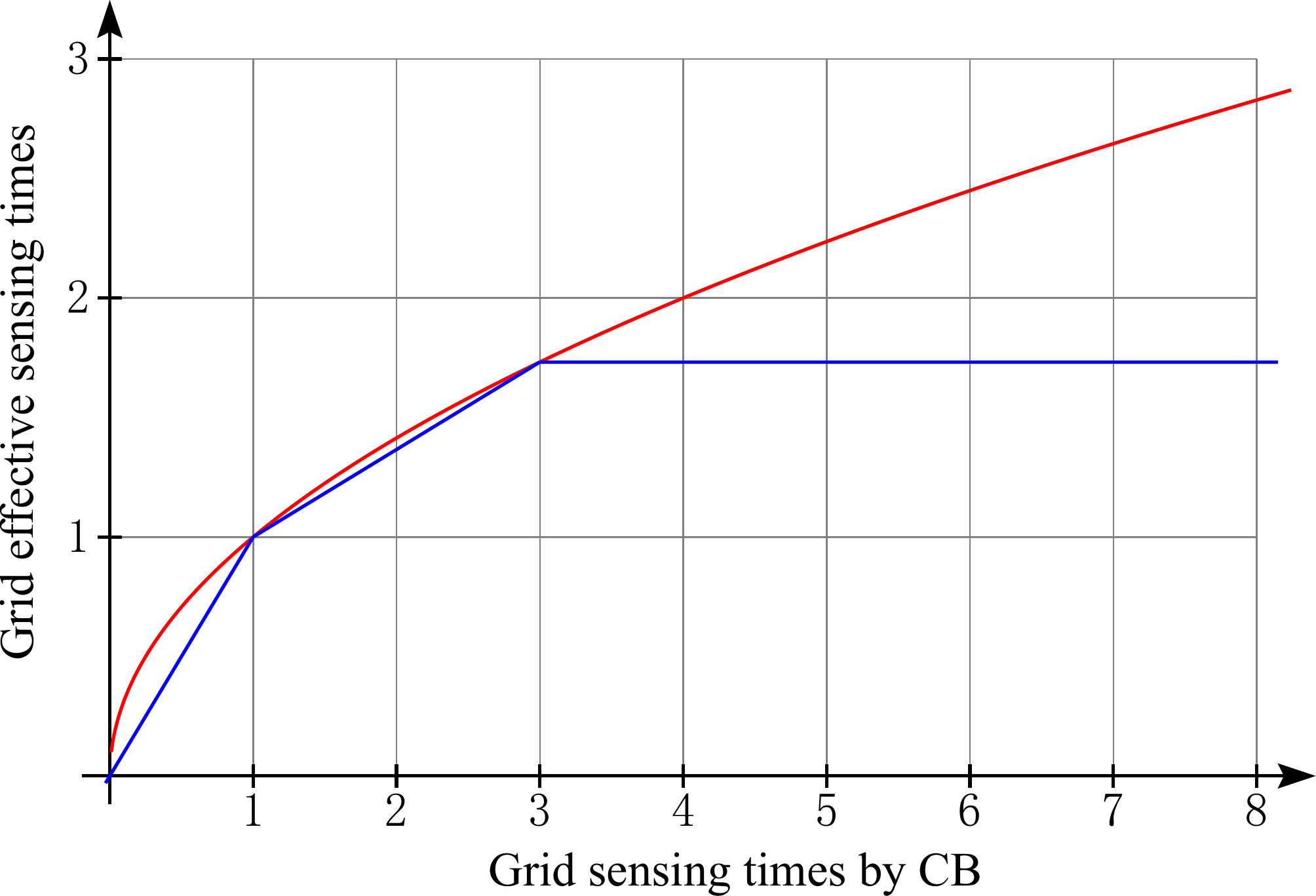}
		\caption{Piecewise affine approximation of the effective sensing time function $f(q)=q^{0.5}$.}
		\label{linearization}
	\end{figure}

	\noindent Finally, the linearized model can be written as:
	\begin{align}
		\label{FP23}
		& \min z=\sum_{b\in B} \sum_{\big((i,t),\,(j,\bar t)\big)\in A}c_{ij}^{t\bar t}\,y_{ijb}^{t\bar t}-\delta \cdot \sum_{g\in G}\sum_{k\in T^s}\mu_{gk}r_{gk}
		\\
		\label{FP24}
		& q_{gk}=\sum_{b\in B}\sum_{\big((i,t),\,(j,\bar t)\big)\in A}z_{ijb}^{t\bar t}\beta_{ijg}^{t\bar t k} \qquad \forall g\in G,\,k\in T^s
		\\
		\label{FP25}
		& \text{Eqns \eqref{FP2}-\eqref{FP7}, \eqref{FP9}-\eqref{FP15}, \eqref{linear2}} 
	\end{align}

	\section{A batch scheduling solution algorithm}\label{secSA}
	\subsection{Algorithm description}\label{subsecAD}
	
	The multi-line drive-by sensing bus assignment \& scheduling problem is an extension of multiple depot vehicle scheduling problem (MDVSP) with heterogeneous fleet, and therefore the model is NP-hard. In order to solve large-scale instances of the problem efficiently, we develop a batch scheduling heuristic algorithm. The main idea is that we optimize schedules of IBs and NBs separately with different objectives, so that the coupling constraints \eqref{FP6}, \eqref{FP8} between IBs and NBs can be relaxed. In this regard, the proposed batch scheduling algorithm could be viewed as an approximation of the original model. The algorithm consists of three main steps. 
	
	\vspace{0.1 in}
	\noindent {\bf Step 1.} Determine the schedule of IB fleet through an IB scheduling sub-model.
	Compared with the original proposed model that optimizes IB and NB scheduling simultaneously, the sub-model focuses on IB scheduling by making the following updates:
	\begin{itemize}
		\item[(i)] We set the objective of the IB scheduling sub-model to maximize trip coverage plus effective sensing and minus relocation travel cost, to encourage IBs to cover as many trips as possible while performing drive-by sensing tasks. 
		
		\item[(ii)] We introduce the parameter $\omega$  to indicate the relative importance of DS gain over trip coverage. Since the value of $\omega$ may impact the two objective values, we will apply a linear search to identify favorable values of  $\omega$ in {\bf Step 3} (a sensitivity analysis of $\omega$ is presented in Appendix \ref{appendixA}).
		
		\item[(iii)] We introduce new binary variables  $h_{ij}^{t\bar t}$ to represent whether timetabled trip $\big((i,t),\,(j,\bar t)\big)\in A_d$ is covered by an IB or not, and add new constraint $\sum_{b\in B_{\text{IB}}}y_{ijb}^{t\bar t}\geq h_{ij}^{t\bar t}$ to guarantee that multiple IB coverages for a trip can only be counted once in the objective function;
		
		\item[(iv)] As only IB fleet is considered, we further simplify the calculation of IB sensing times in the original model as  $q_{gk}=\sum_{b\in B_{\text{IB}}}\sum_{\big((i,t),\,(j,\bar t)\big)\in A}y_{ijb}^{t\bar t}\beta_{ijg}^{t\bar t k}$.
	\end{itemize}

	Let $B_{\text{IB}}$ denote the IB bus fleet. The IB scheduling sub-model is presented as follows.
	\begin{equation}\label{SA27}
		\max z_{\text{IB}}=\sum_{\big((i,t),\,(j,\bar t)\big)\in A_d} c_{ij}^{t\bar t}\, h_{ij}^{t\bar t}+\omega\cdot \delta\cdot\sum_{g\in G}\sum_{k\in T^s}\mu_{gk}\,r_{gk} -\sum_{b\in B_{\text{IB}}}\sum_{\big((i,t),\,(j,\bar t)\big)\in A_r}c_{ij}^{t\bar t}y_{ijb}^{t\bar t}
	\end{equation}
	\begin{eqnarray}
		\label{SA28}
		\sum_{i\in \Theta}v_{bi}\leq 1 & & \forall b\in B_{\text{IB}}
		\\
		\label{SA29}
		\sum_{((i,s),(j,\bar t)) \in A_p}y_{ijb}^{s\bar t}\leq v_{bi} & & \forall b\in B_{\text{IB}},i\in \Theta
		\\
		\sum_{\big((i,s),\,(j,\bar t)\big)\in A_p}y_{ijb}^{s\bar t} - \sum_{\big((j,\bar t),(i,e)\big)\in A_p}y_{jib}^{\bar te}=0 & & \forall b\in B_{\text{IB}},i\in \Theta
		\\
		\label{SA30}
		\sum_{\big((j,\bar t),\,(i,t)\big)\in A}y_{jib}^{\bar t t} - \sum_{\big((i,t),\,(j,\bar t)\big)\in A}y_{ijb}^{t\bar t}=0 & & \forall b\in B_{\text{IB}},(i,t)\in N
		\\
		\label{SA31}
		y_{jib}^{\bar t t}+y_{ijb}^{t \bar t}\leq 1 & & \forall b\in B_{\text{IB}},\big((i,t),\,(j,\bar t)\big)\in A_r
		\\
		\label{SA32}
		\sum_{b\in B_{\text{IB}}}y_{ijb}^{t\bar t}\geq h_{ij}^{t\bar t} & & \forall \big((i,t),\,(j,\bar t)\big)\in A_d
		\\
		\label{SA33}
		q_{gk}=\sum_{b\in B_{\text{IB}}}\sum_{\big((i,t),\,(j,\bar t)\big)\in A}y_{ijb}^{t\bar t}\beta_{ijg}^{t \bar t k} & &\forall g\in G,\,k\in T^s
		\\
		\label{SA34}
		r_{gk} \leq m_l\cdot q_{gk}+c_l & & \forall g\in G,\, k\in T^s, l=1,..,L
		\\
		\label{SA41}
		v_{bi}\in\{0,\,1\} & & \forall b\in B_{\text{IB}},\,i\in \Theta
		\\
		\label{SA42}
		y_{ijb}^{t\bar t}\in\{0,\,1\} &  & \forall b\in B_{\text{IB}},\,\big((i,t),\,(j,\bar t)\big)\in A
		\\
		\label{SA43}
		h_{ij}^{t\bar t}\in\{0,\,1\} & & \forall \big((i,t),\,(j,\bar t)\big)\in A
	\end{eqnarray}
	
	\vspace{0.1 in}
	
	\noindent {\bf Step 2.} Determine the schedule of NBs given the scheduling result of IBs, which is much simpler as its objective is simply to serve unfulfilled timetabled trips at the lowest operation costs. Based on the results of the IB scheduling sub-model, the trip set covered by IBs are obtained as  $A^{\text{IB}}_d=\big\{\big((i,t),\,(j,\bar t)\big)\in A_d \big\vert h_{ij}^{t\bar t}=1\big\}$, where  $A_d$ is the full timetabled trip set. 
	Then, the uncovered timetabled trip set that requires NBs to serve is  $A_{d}^{\text{NB}}=A_d\setminus A^{\text{IB}}_d$. 
	The NB scheduling sub-model is developed as follows.
	
	\begin{equation}\label{SA44}
		\min z_{\text{NB}}=\sum_{b\in B_{\text{NB}}}\sum_{\big((i,t),\,(j,\bar t)\big)\in A}c_{ij}^{t\bar t}\,y_{ijb}^{t\bar t}
	\end{equation}
	\begin{eqnarray}
		\label{SA45}
		\sum_{i\in \Theta}v_{bi}\leq 1 & & \forall b\in B_{\text{NB}}
		\\
		\label{SA46}
		\sum_{((i,s),(j,\bar t))\in A_p}y_{ijb}^{s\bar t}\leq v_{bi} & & \forall b\in B_{\text{NB}},i\in \Theta
		\\
		\label{SA47}
		\sum_{\big((i,s),\,(j,\bar t)\big)\in A_p}y_{ijb}^{s\bar t} - \sum_{\big((j,\bar t),(i,e)\big)\in A_p}y_{jib}^{\bar te}=0 & & \forall b\in B_{\text{NB}},i\in \Theta
		\\
		\sum_{\big((j,\bar t),\,(i,t)\big)\in A}y_{jib}^{\bar t t} - \sum_{\big((i,t),\,(j,\bar t)\big)\in A}y_{ijb}^{t\bar t}=0 & & \forall b\in B_{\text{NB}},(i,t)\in N
		\\
		\label{SA48}
		y_{jib}^{\bar t t}+y_{ijb}^{t\bar t}\leq 1 & & \forall b\in B_{\text{NB}},\,\big((i,t),\,(j,\bar t)\big)\in A_r
		\\
		\label{SA49}
		\sum_{b\in B_{\text{NB}}}y_{ijb}^{t\bar t}\geq 1 & & \forall \big((i,t),\,(j,\bar t)\big)\in A_d^{\text{NB}}
		\\
		\label{SA51}
		y_{ijb}^{t\bar t}\in \{0,\,1\} & & \forall b\in B_{\text{NB}},\big((i,t),\,(j,\bar t)\big)\in A
	\end{eqnarray}
	
	\noindent {\bf Step 3.} Perform linear search of $\omega$ for better trade-off between drive-by sensing and trip coverage, and thereby improving the overall objective. Suppose  $\omega\in[0,\,\bar{\omega}]$, for parameter values  $\omega=0,\,\Delta,\,2\Delta,\,\ldots,\,\bar{\omega}$ with increment $\Delta$, we respectively apply the IB scheduling sub-model and the NB scheduling sub-model to obtain corresponding bus schedules and objective values. The value of $\omega$ with the best objective is selected.
	
	The batch scheduling heuristic is described in Algorithm \ref{alg1}.

	\begin{algorithm}[h!]
		\begin{tabbing}
			\hspace{0.01 in}\=  \hspace{0.2 in}\= \kill 
			\>  {\bf Step 0: Initialization} \>
			\\
			\>  \>  (1) Set time-dependent grid drive-by sensing weights;
			\\
			\>  \>  (2) Calculate arc costs according to bus travel time;
			\\
			\>   \> (3) Construct the time-expanded network. Classify arcs into sets of bus service arcs, 
			\\
			\> \> relocation arcs, wait arcs, and pull-out/pull-in arcs.
			\\
			\>  {\bf Step 1: Determine IB schedules}   \>
			\\
			\> \> (1) Construct the IB scheduling sub-model by setting parameter $\omega$. The sub-model is
			\\
			\> \> always feasible, as no hard constraints are put on IB scheduling;
			\\
			\> \> (2) Solve the sub-model using Gurobi;
			\\
			\> \> (3) Obtain schedules of each IB and calculate objective counterparts in the original
			\\
			\> \> model;
			\\
			\> \> (4) Obtain covered trip set by IBs as $A_d^{\text{IB}}$.
			\\
			\> {\bf Step 2: Determine NB schedules}  \>
			\\
			\> \> (1) Construct the NB scheduling sub-problem based on the uncovered trip set
			\\
			\> \> $A_d^{\text{NB}}=A_d\setminus A_d^{\text{IB}}$;
			\\
			\> \> (2) Solve the sub-problem using Gurobi;
			\\
			\> \> (3) If the sub-model is solved to optimality, obtain schedules of each NB and calculate
			\\
			\> \> objective counterparts in the original model;
			\\
			\> \> (4) If the sub-problem is infeasible, set the objective value as infinity.
			\\
			\> {\bf Step 3: Perform parameter linear search for objective reduction} \>
			\\
			\> \> (1) Run {\bf Step 1} and {\bf Step 2} sequentially and respectively for parameter values $\omega=\Delta,$
			\\
			\> \> $2\Delta,\,\ldots,\, \bar{\omega}$. Obtain the corresponding objective values;
			\\
			\> \> (2) Determine the best parameter and objective value. The corresponding IB and NB
			\\
			\> \> schedules are deemed the solutions to the  problem.
		\end{tabbing}
		\caption{(The batch scheduling algorithm)}
		\label{alg1}
	\end{algorithm}

	\subsection{Theoretical bounds on optimality}
	To derive theoretic bounds on the solution quality of the proposed batch scheduling algorithm, we construct the following three sub-problems, which are mathematically elaborated in Table \ref{subproblems}.

\begin{itemize}
\item The first sub-problem (SP1) represents normal bus operation without any DS consideration, by minimizing bus operational costs. The optimal solution, which is also the minimum bus operation costs required for serving multi-line timetables, is denoted $C_{\text{NB}}^*$. 

	\item The second sub-problem (SP2) represents an extreme case in the batch scheduling algorithm where the weight $\omega=0$ in \eqref{SA27}. The sub-problem first maximizes trip coverage minus operational cost of IB fleet and then minimizes operational cost of NBs. The total bus operational cost is denoted $C_{\text{BS}}^*$; the DS objective value associated with IB fleet is denoted $R_{\text{BS}}^*$. 
	
	\item The third sub-problem (SP3) represents another extreme case of $\omega \to+\infty$. The sub-model first maximizes DS quality using IB fleet and then minimizes operational cost of NBs. The resulting DS quality and total bus operational costs are denoted $R_{\text{DS}}^*$ and $C_{\text{DS}}^*$, respectively. 
\end{itemize}

\begin{table}[h!]
		\centering
		\caption{Sub-problems for optimality analysis.}
		\begin{tabular}{|c|c|c|c|}
			\hline
			\multirow{3}{*}{Sub-problem} & \multirow{3}{*}{Model detail}  & \multicolumn{2}{c|}{Optimal solution}
			\\\cline{3-4}
			& & Operational  & DS 
			\\
			& & cost & quality
			\\
			\hline
			SP1 &  \makecell[l]{ $\min z=\sum \limits_{b\in B}\sum\limits_{((i,t),\,(j,\bar t))\in A}c_{ij}^{t\bar t}\,y_{ijb}^{t\bar t}$ 
			\\ s.t. Eqns \eqref{FP2}-\eqref{FP6}, \eqref{FP10}-\eqref{FP11}}& $C_{\text{NB}}^*$ & - \\
			\hline
			SP2 & \makecell[l]{ $\max z_{\text{IB}}=\sum\limits_{((i,t),\,(j,\bar t))\in A_d} c_{ij}^{t\bar t}\, h_{ij}^{t\bar t}$ \\ \hspace{0.7 in}$-\sum\limits_{b\in B_{\text{IB}}}\sum\limits_{((i,t),\,(j,\bar t))\in A_r}c_{ij}^{t\bar t}y_{ijb}^{t\bar t}$ 
			\\ s.t. Eqns \eqref{SA28}-\eqref{SA32}, \eqref{SA41}-\eqref{SA43} 
			\\
			$\min z_{\text{NB}}=\sum\limits_{b\in B_{\text{NB}}}\sum\limits_{((i,t),\,(j,\bar t))\in A}c_{ij}^{t\bar t}\,y_{ijb}^{t\bar t}$ \\s.t. Eqns \eqref{SA45}-\eqref{SA51} }& $C_{\text{BS}}^*$ & $R_{\text{BS}}^*$ \\
			\hline
			SP3 & \makecell[l]{$\max z_{\text{IB}}= \sum\limits_{g\in G}\sum\limits_{k\in T^s}\mu_{gk} r_{gk}$ 
			\\ s.t.  Eqns \eqref{SA28}-\eqref{SA31}, \eqref{SA33}-\eqref{SA42} \\ $\min z_{\text{NB}}=\sum\limits_{b\in B_{\text{NB}}}\sum\limits_{((i,t),\,(j,\bar t))\in A}c_{ij}^{t\bar t}\,y_{ijb}^{t\bar t}$ \\s.t. Eqns \eqref{SA45}-\eqref{SA51}}& $C_{\text{DS}}^*$ & $R_{\text{DS}}^*$ 			
			\\\hline
		\end{tabular}
		\label{subproblems} 
	\end{table}

	Note that these sub-problems could be solved to optimality more easily than the original model, since the DS and operational objectives are decoupled, which reduces the complexity arising from the coordination of IBs and NBs. SP1-SP3 are used in Proposition \ref{proposition1} to derive lower and upper bounds of the batch scheduling algorithm, as well as its worst-case performance guarantee.

\begin{proposition}\label{proposition1}
Let the objective value of the exact solution of \eqref{FP23}-\eqref{FP25} be $z^*$, and the objective value of the solution from the batch scheduling algorithm be $z$. The following holds:
\begin{itemize}
			\item {\bf Lower and upper bounds:}  
\begin{equation}\label{LUB}
C_{\text{NB}}^*-\delta\cdot R_{DS}^* \leq z^* \leq z \leq \max\big\{C_{DS}^*-\delta\cdot R_{DS}^*, C_{BS}^*-\delta\cdot R_{BS}^*\big\}
\end{equation}
\item {\bf Gap estimate}. The objective gap of the proposed batch scheduling algorithm satisfies 
\begin{equation}\label{gapB}
\frac{z-z^*}{z^*} \leq \frac{\max\{C_{DS}^*-\delta\cdot R_{DS}^*, C_{BS}^*-\delta\cdot R_{BS}^*\}}{C_{NB}^*-\delta\cdot R_{DS}^*}-1
\end{equation}
		\end{itemize}
	\end{proposition}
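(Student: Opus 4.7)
The strategy is to establish the three inequalities in \eqref{LUB} separately and then obtain \eqref{gapB} by elementary algebra. First, I would observe that the output of Algorithm \ref{alg1} is a feasible solution of the original problem \eqref{FP23}--\eqref{FP25}: the IB sub-model enforces all IB-side constraints \eqref{FP2}--\eqref{FP5}, \eqref{FP7}, \eqref{FP9}--\eqref{FP15}, while the NB sub-model covers the complementary trip set $A_d^{\text{NB}}$, so that \eqref{FP6} holds on the union. Hence $z^* \leq z$ automatically.

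\textbf{Lower bound on $z^*$.} The key observation is that SP1 and SP3 decouple the two terms of the objective \eqref{FP23}. Any solution feasible to \eqref{FP23}--\eqref{FP25} also satisfies the constraints \eqref{FP2}--\eqref{FP6} of SP1, so its operational cost is at least $C_{\text{NB}}^*$. Similarly, since the sensing score depends only on the IB trajectories, any feasible solution has sensing score at most $R_{\text{DS}}^*$---the maximum attainable by SP3 over a feasibility region that contains the projection of the original one onto the IB variables. Substituting these two inequalities into $z^* = (\text{op.~cost})^{\mathrm{opt}} - \delta \cdot (\text{DS})^{\mathrm{opt}}$ yields $z^* \geq C_{\text{NB}}^* - \delta R_{\text{DS}}^*$.

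\textbf{Upper bound on $z$.} The linear search in Step 3 of Algorithm \ref{alg1} explicitly evaluates $\omega = 0$. When $\omega = 0$, the IB sub-model \eqref{SA27}--\eqref{SA43} is precisely the first stage of SP2, and the subsequent NB sub-model is SP2's second stage; evaluating \eqref{FP23} at the resulting schedule gives $C_{\text{BS}}^* - \delta R_{\text{BS}}^*$. A symmetric argument at $\omega = \bar\omega$, provided $\bar\omega$ exceeds the lexicographic threshold at which the sensing term in \eqref{SA27} dominates the finite trip-coverage and relocation terms (the threshold is computable from the range of $c_{ij}^{t\bar t}$), reduces the IB sub-model to the first stage of SP3, giving objective value $C_{\text{DS}}^* - \delta R_{\text{DS}}^*$. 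Since the algorithm returns the smallest objective over the searched grid,
\begin{equation*}
z \;\leq\; \min\bigl\{C_{\text{BS}}^* - \delta R_{\text{BS}}^*,\; C_{\text{DS}}^* - \delta R_{\text{DS}}^*\bigr\} \;\leq\; \max\bigl\{C_{\text{DS}}^* - \delta R_{\text{DS}}^*,\; C_{\text{BS}}^* - \delta R_{\text{BS}}^*\bigr\}.
\end{equation*}

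\textbf{Gap estimate and main obstacle.} Combining the three inequalities proves \eqref{LUB}. For \eqref{gapB}, I would write $z - z^* \leq \max\{\ldots\} - (C_{\text{NB}}^* - \delta R_{\text{DS}}^*)$, divide by $z^* \geq C_{\text{NB}}^* - \delta R_{\text{DS}}^* > 0$, and rearrange; the sign hypothesis is the practical regime since the weighted sensing reward does not dominate the NB-only operational cost. The main technical obstacle I anticipate is the $\omega = \bar\omega$ step of the upper-bound argument: the algorithm only searches a finite grid $\{0,\Delta,2\Delta,\ldots,\bar\omega\}$, so I must justify that $\bar\omega$ is (or can be) chosen above the lexicographic threshold that reproduces SP3's optimum; a secondary, milder issue is verifying that the NB sub-model remains feasible whenever the IB sub-model uses all $M$ IBs, which relies on $|B_{\text{NB}}|$ being large enough to serve $A_d$ alone. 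Everything else reduces to bookkeeping across the constraint sets of SP1--SP3.
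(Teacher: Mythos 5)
Your proposal is correct and follows essentially the same route as the paper: the lower bound comes from $C^*\geq C_{\text{NB}}^*$ and $R^*\leq R_{\text{DS}}^*$, the upper bound from the extreme cases SP2/SP3 reached through the $\omega$-search, and the gap estimate from dividing by $z^*\geq C_{\text{NB}}^*-\delta R_{\text{DS}}^*>0$ (a positivity assumption the paper also uses implicitly); your $\omega=0$ anchoring is in fact slightly more explicit than the paper's informal ``no worse than the two extreme cases'' argument. Note that the obstacle you flag about $\bar\omega$ exceeding the lexicographic threshold is not actually needed for the stated bound, since $z\leq C_{\text{BS}}^*-\delta R_{\text{BS}}^*\leq\max\{C_{\text{DS}}^*-\delta R_{\text{DS}}^*,\,C_{\text{BS}}^*-\delta R_{\text{BS}}^*\}$ already follows from the $\omega=0$ run alone; it would only be required for your stronger $\min$ version.
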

	
	\begin{proof}
		Let $C^*$ and $R^*$ respectively be the bus operational costs and DS sensing objective associated with the exact solution $z^*$. Then, $C^*\geq C_{\text{NB}}^*$ because $C_{\text{NB}}^*$ is the minimum bus operation costs required for serving multi-line timetables. It is also true that $R^*\leq R_{\text{DS}}^*$ as $R_{\text{DS}}^*$ is the maximum sensing power of IB fleet. Therefore, we obtain the lower bound for $z$ as:
		\begin{equation}
			z\geq z^*=C^*-\delta\cdot R^*\geq C_{\text{NB}}^*-\delta\cdot R_{\text{DS}}^*
		\end{equation}
		
		\noindent The upper bound of $z$ is obtained in either one of two extreme cases; i.e. IBs are scheduled to save operation costs only (SP2), or to maximize DS gain only (SP3). The reason is that the proposed heuristic generates IB and NB schedules by trading operational costs off with DS gain, its objective should be no worse than that of the two extreme cases. Accordingly, the upper bound can be written as:
		\begin{equation}
			z\leq \max\big\{C_{\text{DS}}^*-\delta\cdot R_{\text{DS}}^*, C_{\text{BS}}^*-\delta\cdot R_{\text{BS}}^*\big\}
		\end{equation}
		
		\noindent Finally, the worst-case relative gap of the proposed heuristic can be estimated as:
		\begin{align}
			\frac{z-z^*}{z^*} &= \frac{z}{z^*}-1\\
			&\leq \frac{z}{C_{\text{NB}}^{*}-\delta\cdot R_{\text{DS}}^{*}}-1\\
			&\leq \frac{\max\big\{C_{\text{DS}}^*-\delta\cdot R_{\text{DS}}^*, C_{\text{BS}}^*-\delta\cdot R_{\text{BS}}^*\big\}}{C_{\text{NB}}^{*}-\delta\cdot R_{\text{DS}}^{*}}-1
		\end{align}
	\end{proof}

	\begin{corollary}
		From Proposition \ref{proposition1} we derive that $\lim_{\delta \to \infty}\frac{z-z^*}{z^*}=0$, suggesting that the proposed algorithm approaches the exact solution when the DS objective dominates the operational costs, i.e. when $\delta \to +\infty$. 
	\end{corollary}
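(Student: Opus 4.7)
The plan is to start from the explicit gap estimate \eqref{gapB} in Proposition \ref{proposition1}, and to show that its right-hand side vanishes as $\delta\to+\infty$; combined with $z\geq z^{*}$, a squeeze argument then delivers the corollary.

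First I would isolate the leading $\delta$-behaviour of the numerator and denominator of the fraction in \eqref{gapB}. Both are affine in $\delta$ with negative slope, so after dividing through by $-\delta$ the ratio reduces to one of slope-magnitudes, with the constants $C_{\text{NB}}^{*},\,C_{\text{DS}}^{*},\,C_{\text{BS}}^{*}$ contributing only $O(1/\delta)$ corrections that vanish in the limit. Matching the branch of the $\max$ aligned with the $R_{\text{DS}}^{*}$-coefficient in the denominator, the ratio converges to $R_{\text{DS}}^{*}/R_{\text{DS}}^{*}=1$, and subtracting $1$ gives the desired vanishing.

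The delicate step, and the one I expect to be the main obstacle, is the justification of which branch of the $\max$ in the numerator is asymptotically binding. A literal $\max$-of-linear-functions argument selects the branch with the smaller slope-magnitude, i.e.\ the $R_{\text{BS}}^{*}$-branch (since $R_{\text{DS}}^{*}\geq R_{\text{BS}}^{*}$ by optimality of SP3 for the DS objective), which would instead yield a nonzero limit $R_{\text{BS}}^{*}/R_{\text{DS}}^{*}-1$. To recover the claimed conclusion I would appeal to the reasoning behind Step 3 of the batch algorithm: as $\delta$ grows, the product $\omega\delta$ multiplying the DS term in the IB sub-model objective \eqref{SA27} dominates for any positive $\omega$, steering the IB sub-model toward SP3 and making the effective upper envelope for $z$ behave as $C_{\text{DS}}^{*}-\delta R_{\text{DS}}^{*}$ rather than the looser $\max$ in \eqref{gapB}. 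Replacing the $\max$ by this sharper envelope, which is a tightening of the upper bound in Proposition \ref{proposition1} using the same argument, then closes the limit cleanly.
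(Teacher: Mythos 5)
Your proposal is correct in substance, and it is in fact more careful than the paper, which states the corollary without proof as an immediate consequence of Proposition \ref{proposition1}. You rightly observe that the limit cannot be read off \eqref{gapB} verbatim: for large $\delta$ the maximum is attained by the $C_{\text{BS}}^*-\delta R_{\text{BS}}^*$ branch (since $R_{\text{BS}}^*\le R_{\text{DS}}^*$ by optimality of SP3), so the right-hand side of \eqref{gapB} tends to $R_{\text{BS}}^*/R_{\text{DS}}^*-1$, which is nonzero whenever $R_{\text{BS}}^*<R_{\text{DS}}^*$. Your repair is exactly the missing ingredient: for any fixed $\omega>0$ the term $\omega\delta$ dominates \eqref{SA27}, so for $\delta$ sufficiently large the batch algorithm's IB schedule attains the maximal DS value $R_{\text{DS}}^*$ at a bounded operational cost, hence $z\le \mathrm{const}-\delta R_{\text{DS}}^*$; together with the lower bound $z^*\ge C_{\text{NB}}^*-\delta R_{\text{DS}}^*$ this gives $z-z^*=O(1)$ while $|z^*|$ grows linearly in $\delta$ (assuming $R_{\text{DS}}^*>0$), so the relative gap vanishes.

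Two points you should make explicit to close the argument. First, for large $\delta$ the quantities $z$, $z^*$ and $C_{\text{NB}}^*-\delta R_{\text{DS}}^*$ become negative, so the division steps used to derive \eqref{gapB} reverse direction there; the clean route is the one implicit in your normalization by $-\delta$, namely bound the difference $z-z^*$ by a $\delta$-independent constant and divide by $|z^*|\to\infty$, rather than manipulating \eqref{gapB} itself. Second, the tightened envelope's intercept need not be exactly $C_{\text{DS}}^*$: for a fixed grid value $\omega>0$ the DS-maximizing IB schedule selected by \eqref{SA27} may differ from SP3's (and the finite search grid does not contain the $\omega\to\infty$ limit), but all that is needed is that the resulting total cost is bounded independently of $\delta$, which holds because arc costs are finite. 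With these caveats your argument is complete, and it supplies the justification that the paper omits.
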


	\section{Computational experiments}\label{secCE}
	This section conducts a series of experiments to evaluate the performance of the batch scheduling algorithm and assess the impact of the proposed bus scheduling method on sensing and operation objectives. 
	Section \ref{subsecDD} describes data and exogenous parameters, including bus timetables, travel times, corresponding costs, and grid sensing weights. 
	Section \ref{subSecBM} details the three compared benchmark methods.
	Section \ref{subsecAP} analyzes algorithm performances. 
	Section \ref{subsecCR} provides optimization results and discussions on: optimal IB and NB schedules, impact of active IB scheduling, impact of the temporal sensing granularity, and trade-off between sensing quality and cost. 
	All experiments are conducted on a Microsoft Windows 10 platform with Intel(R) Core(TM) i7-10700F CPU @ 2.90GHz and 16.0 GB RAM, using Python 3.9.12 and Gurobi 9.5.1. 
	
	\subsection{Description of data and exogenous parameters}\label{subsecDD}

	The case study focuses on the central area of Chengdu, China, which has diverse land use types, including central business districts, residential area, industrial parks and transportation hubs. The area is meshed into 1km-by-1km grids, and the grid sensing weights are calculated based on the land use types, as shown in Fig. \ref{figLines}.
	
	\begin{figure}[h!]
		\centering
		\includegraphics[width=0.7\textwidth]{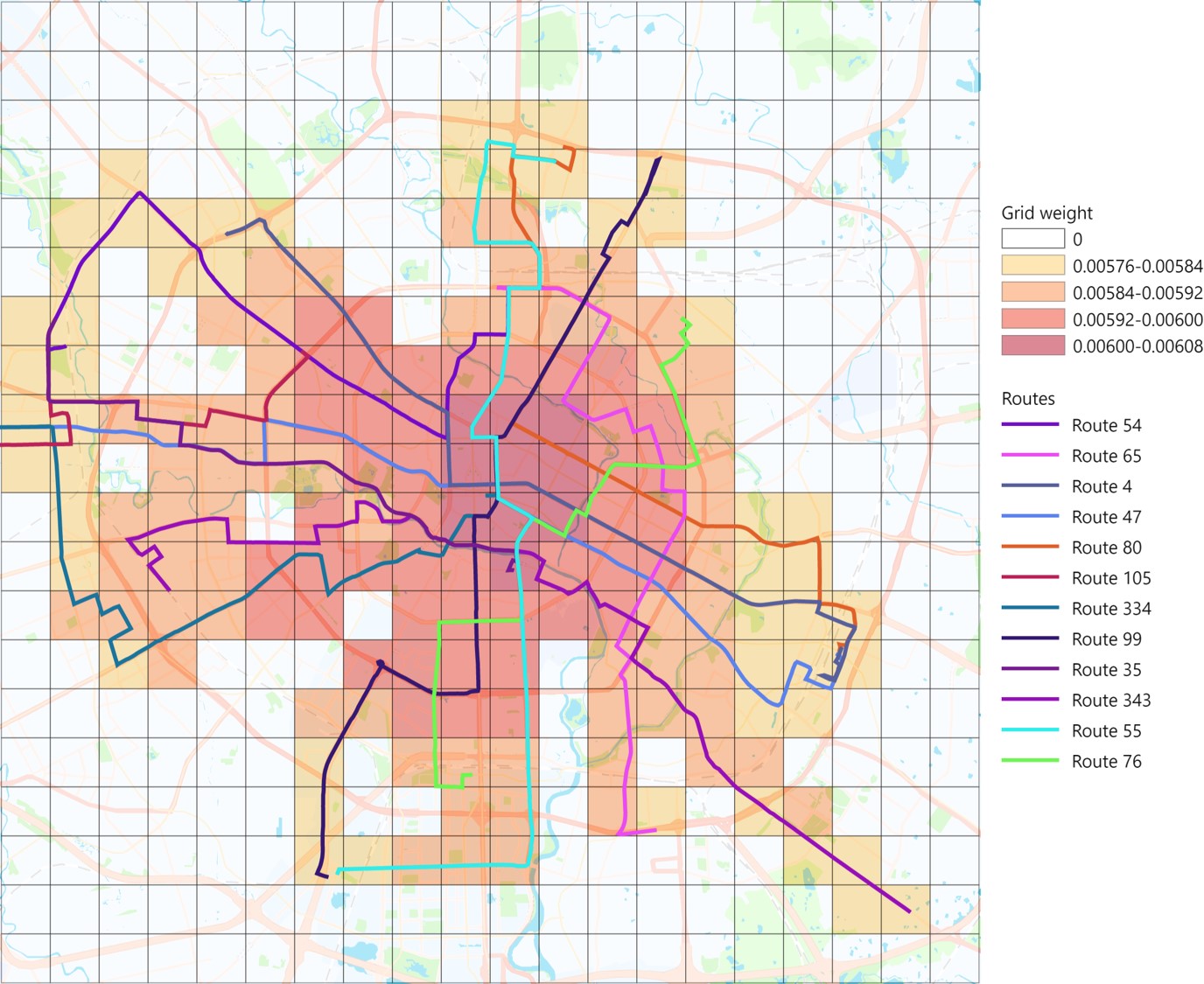}
		\caption{The study area with 12 bus lines and a mesh structure of 1 km $\times$ 1 km}
		\label{figLines}
	\end{figure}

	Twelve bus lines have been selected for the drive-by sensing experiment, which are shown in Fig. \ref{figLines}. The bus timetables are provided by local operator, and bus travel times of these lines are obtained by analyzing bus GPS trajectories during 2021.10.01-2021.10.31. These data are illustrated in a time-expanded manner in Fig. \ref{figTimetable}. It is observed that:  
	\begin{enumerate}
		\item  The timetables are regular but with different dispatch frequencies. For example, the dispatch headway of Line 105 is 30 min, while other lines has a headway of 15 min; 
		\item Bus travel times are time-dependent and varying. For example, bus travel times of Line 55 vary from 60 min to 75 min. 
	\end{enumerate}

	Arc costs mainly include bus fixed costs and travel costs. The fixed bus cost is estimated as 856 RMB for a single day of operation based on the BYD B12 model with a price of 2.5 million RMB and a service life of 8 years. Bus travel cost is estimated as 1.4 RMB per travel minute according to \cite{li2014transit}. The bus relocation cost consists of fixed and variable components: The fixed cost is 20 RMB, while the variable cost is calculated based on the unit travel cost of 1.4 RMB/min. The bus relocation times between two terminals are obtained from the point-to-point travel times provided by Amap.com, a digital map service provider.

	Finally, we adopt the piecewise affine effective sensing time function $\hat f(q_{gk})$ shown in Eqn \eqref{example} and Fig. \ref{linearization}. In all calculations presented in Sections \ref{subsecAP}-\ref{subsecCR}, we set the weighting parameter $\delta=4000$ in Eqn \eqref{FP1}, while explaining such a choice in Section \ref{subsecPD}.
	
	\begin{figure}[h!]   
		\centering
		\includegraphics[width=0.5\textwidth]{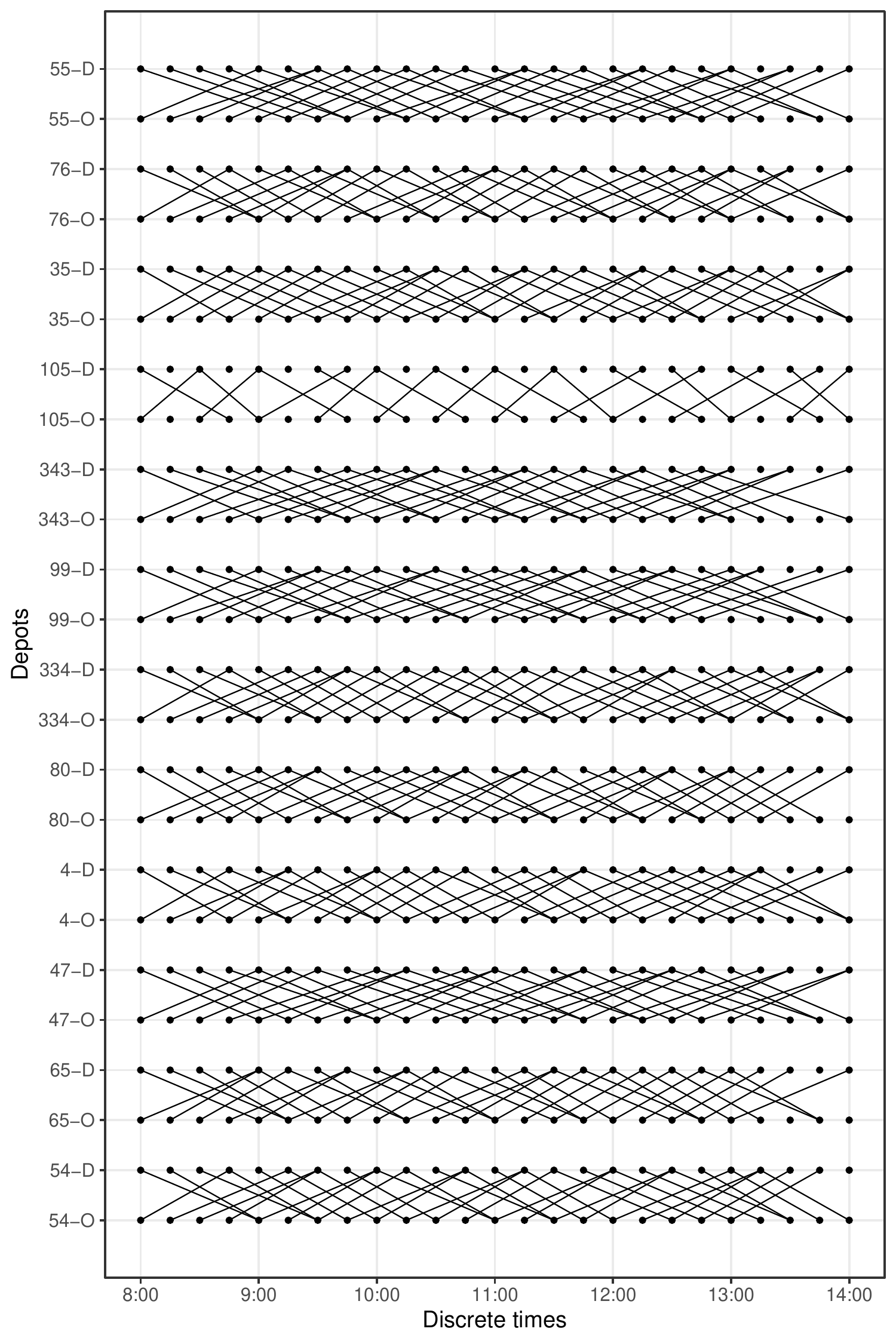}
		\caption{Bus timetables considered in the case study. The discrete time step is 15 min.}
		\label{figTimetable}
	\end{figure}

	\subsection{Different bus drive-by sensing approaches}\label{subSecBM}
	
	We consider the following bus drive-by sensing scenarios to demonstrate the effectiveness of the proposed single-line and multi-line bus scheduling methods. 
	
	\begin{itemize}
		\item[(1)] \textbf{Optimal sensor deployments without active bus scheduling (M1)}: Following \cite{james2012sensor}, this method aims to allocate sensors to bus lines with the objective of maximizing grid coverage percentage. The problem is formulated as a set covering problem as follows:
		
		\begin{equation}
			\max \, z= \sum_{g \in G} u_g
		\end{equation}
		\begin{eqnarray}
			\sum_{r\in R} \upsilon _{sr} \leq 1 && \forall s \in S
			\\
			\sum_{s\in S}\sum_{r\in R} \upsilon _{sr}\phi_{rg} \geq u_g & & \forall g\in G
			\\
			\upsilon _{sr} \in\{0,1\} && \forall s \in S, r\in R
			\\
			u_g \in \{0,1\} && g \in G
		\end{eqnarray}
		
		where the decision variable $\upsilon _{sr}\in \{0,1\}$ indicates whether sensor $s$ is installed to line $r$, and  $u_g\in \{0,1\}$ indicates whether grid $g$ is covered by a line. The binary parameter $\phi_{rg}$ indicates whether line $r$ traverses grid $g$. We note that this formulation solves the sensor-line, rather than sensor-bus, assignment, which means the actual time-varying grid coverage is dependent on subsequent bus scheduling decisions. The performance of this policy is obtained by randomizing the selection of buses for dispatch, without knowledge of sensor instrumentation.

		\item[(2)] \textbf{Optimal sensor deployments with active single-line scheduling (M2)}:  this policy jointly optimize sensor deployment and single-line IB and NB scheduling. 
		The underlying assumption is that buses cannot be relocated to a different line. 
		
		\item[(3)] \textbf{Optimal sensor deployments with active multi-line scheduling (M3)}: this policy jointly optimizes sensor deployment and multi-line IB and NB scheduling. The key difference from M2 is that it allows buses to be relocated to different lines in the network. 
	\end{itemize}
	
	 The three methods represent different states of practice pertaining to bus-based DS. M1 concerns with sensor deployment to a bus fleet but without any operational intervention. This approach is strategic in nature and has been studied in the literature as a set covering problem \citep{james2012sensor} or subset selection problem \citep{tonekaboni2020spatio}. The drawback, however, is  that (i) the scheduling of instrumented buses is not optimized to deliver satisfactory sensing outcome, and (ii) covering an entire city could be very costly since the number of lines under consideration could be huge. M2 and M3 are proposed by this work to boost the sensing efficacy of bus DS through active bus scheduling. In particular, M2 remedies situation (i) above by simultaneously scheduling IBs and NBs within their designated lines to optimize the temporal coordination of the sensors. M3 adds another level of flexibility to bus operations by allowing an IB to be relocated to other lines, thereby expanding the spatial coverage of an IB fleet. This is a promising way to address challenge (ii) above. In summary, the three approaches represent increasing degrees of flexibility in sensor distribution, both temporally and spatially. A comparative study of these methods, as we will undertake in the following sections, will quantify their impact on urban crowdsensing and offer managerial insights for the adoption of bus-based DS.

	\subsection{Algorithm performance}\label{subsecAP}
	
	We assess the proposed batch scheduling algorithm with varying problem sizes. Specifically, we generate 10 instances by varying the number of lines (6, 9, 12, 16 and 20) and the planning horizon (6 and 8 hours). The transit network with 20 lines is shown in Appendix \ref{appendixB}. The IB fleet size is set as 5, while the NB fleet size is determined by the optimization procedure. The maximum computational time is set as one hour for Gurobi. The parameter MipGap in Gurobi is set as 0.01 for all the models. Parameter searching space of $\omega$ is set as $\{0.5, 1.0, 1.5\}$, where a more thorough sensitive analysis of $\omega$ is presented in Appendix \ref{appendixA}. 
	
		\begin{table}[h!]
		\centering
		\caption{Bounds and worse-case performance of the proposed algorithm. Obj and Gap are the actual objective value and gap (to LB) of the proposed algorithm.}
		\begin{threeparttable}	
			\begin{tabular}{ccccccc}
				\hline
				\# Lines & \# Hours & LB & UB & Worst-case gap & Obj & Gap
				\\
				\hline
				\multirow{2}{*}{6} & 6 & 58041.6 & 64423.0 & 11.0\% & 58852.5 & 1.4\%
				\\
				& 8 & 65916.3 & 72360.0 & 9.8\%  & 66815.1 & 1.4\%
				\\\hline
				\multirow{2}{*}{9} & 6 & 86837.5 & 93788.0 & 8.0\% & 87774.2 & 1.1\%
				\\
				& 8 & 97687.5 & 104658.0 & 7.1\%  & 98781.3 & 1.1\% 
				\\\hline
				\multirow{2}{*}{12} & 6 & 118730.5 & 127621.0 & 7.5\%  & 119636.9 & 0.8\% 
				\\
				& 8 & 133650.7 & 145390.0 & 8.8\%  & 134749.7 & 0.8\%
				\\\hline
				\multirow{2}{*}{16} & 6 & 140259.4 & 153423.0 & 9.4\%  & 142379.2 & 1.5\%
				\\
				& 8 & 159850.6 & 174195.6 & 9.0\%  & 164933.3 & 3.2\%
				\\\hline
				\multirow{2}{*}{20} & 6 & 165104.1 & 177006.0 & 7.2\%  & 167627.1 & 1.5\%
				\\
				& 8 & 186967.3 & 202354.7 & 8.2\%  & 193904.4 & 3.7\%
				\\\hline
			\end{tabular}
		\end{threeparttable}	
		\label{tabAGAP} 			
	\end{table}
	
	Table \ref{tabAGAP} shows the theoretical lower (LB) and upper bounds (UB), as well as the worse-case gap, of the proposed algorithm according to Proposition \ref{proposition1}. The parameters used in the formulae \eqref{LUB} and \eqref{gapB} are obtained via Gurobi with 1\% optimality gap. It is remarkable that the actual objective values by the proposed algorithm is very close to the theoretical lower bounds, with all gaps below $3.7\%$, which not only shows a very good solution quality, but also implies that the lower-bound estimate in Proposition \ref{proposition1} is quite sharp.

	Table \ref{tabAP} evaluates the computational performance in terms of optimality gap and computational (CPU) time. The lower bound (LB) provided by the commercial solver Gurobi is used as the benchmark. Firstly, solutions generated by Gurobi have good quality for small instances (12  lines or less), with $\text{Gap}_1$ between 0.29\% and 0.65\%, but deteriorate for the 16- and 20-line cases, with $\text{Gap}_1$ over 12\%. In fact, the time limit (3600s) was reached before Gurobi can converge. On the other hand, the batch scheduling algorithm consistently outperforms Gurobi across all instances, especially for large-scale cases (16 and 20 lines) with at least 78.7\% gap reduction (16 lines, 8 hours), meanwhile saving substantial computational time (by up to 68.8\%).

	\begin{table}[h!]
		\centering
		\caption{Algorithm performance. }
		\begin{threeparttable}	
			\begin{tabular}{ccllll|lll}
				\hline
				\multirow{2}{*}{\# Lines}  &  \multirow{2}{*}{\# Hours} & \multicolumn{4}{c|}{Gurobi} & \multicolumn{3}{c}{Batch scheduling algorithm}
				\\\cline{3-9}
				&  & LB & Obj$_1$ & Gap$_1$ & CPU & Obj$_2$ & Gap$_2$ & CPU
				\\
				\hline
				\multirow{2}{*}{6} & 6 & 58713.0 & 59048.8 & 0.57\% & 17 s & 58852.5 & 0.24\% & 19 s
				\\
				& 8 & 66618.4 & 66810.2 & 0.29\% & 61 s & 66815.1 & 0.30\% & 63 s
				\\\hline
				\multirow{2}{*}{9} & 6 & 87460.0 & 87931.5 & 0.54\% & 108 s & 87774.2 & 0.36\% & 81 s
				\\
				& 8 & 98305.9 & 98953.0 & 0.65\% & 115 s & 98781.3 & 0.48\% & 259 s
				\\\hline
				\multirow{2}{*}{12} & 6 & 119044.7 & 119637.9 & 0.50\% & 1462 s & 119636.9 & 0.50\% & 794 s
				\\
				& 8 & 134159.7 & 134978.4 & 0.61\% & 3600 s & 134749.7 & 0.44\% & 1485 s
				\\\hline
				\multirow{2}{*}{16} & 6 & 141813.6 & 162397.4 & 12.7\% & 3600 s & 142379.2 & 0.40\% & 1125 s
				\\
				& 8 & 160403.1 & 184092.8 & 12.9\% & 3600 s & 164933.3 & 2.75\% & 2964 s
				\\\hline
				\multirow{2}{*}{20} & 6 & 167127.6 & 192935.2 & 13.4\% & 3600 s & 167627.1 & 0.30\% & 1344  s
				\\
				& 8 & 187999.7 & 222681.2 & 15.6\% & 3600 s & 193904.4 & 3.10\% & 3468 s
				\\\hline
			\end{tabular}
			\begin{tablenotes}
				\item Note: LB is the lower bound provided by Gurobi; Obj$_1$ is the objective value of the best incumbent solution of Gurobi; Gap$_1$ is the gap between the LB and Obj$_1$. Obj$_2$ is the objective value of the optimal solution by the batch scheduling algorithm; Gap$_2$ is the gap between LB and Obj$_2$.
			\end{tablenotes}
		\end{threeparttable}	
		\label{tabAP} 			
	\end{table}

	\subsection{Optimization results and sensitivity analyses} \label{subsecCR}
	In this section, we show and interpret the optimization results, which provide insights into the joint management of bus fleet and drive-by sensing objectives.

	\subsubsection{IB and NB schedules}
	
	We begin by analyzing IB and NB schedules generated by the proposed methods to illustrate their coordination in serving the timetables while performing DS tasks. In this section, the number of IBs is set to be 5. We illustrate their schedules from three aspects.
\begin{enumerate}
\item	{\bf Utilization of sensors:} Figs. \ref{figinbrouting} (a) and (b) compare the IB schedules under M2 and M3 respectively. For M2, we see that the 5 IBs are distributed in 5 lines (\# 55, 76, 343, 47, and 54). In the M3 approach, these 5 IBs are scheduled to serve 7 lines through relocation, extending the spatial coverage of M2. Indeed, the grid coverage rates (percentage of grids covered by IB at least once) are 68\% in M2 and 81\% in M3. Moreover, there are 9 wait arcs in M2, and only 2 wait arcs in M3, meaning that inter-line relocations can further increase the utilization of sensing resources. 

\item {\bf IB relocations:} For M3, Line 80 has the most interaction with the other lines (\# 55, 99, 4, 47) in terms IB relocations. Fig. \ref{relocations} (a) explains why: (1) they all have terminals that are close to each other, which facilitates bus relocations; and (2) these lines all extend to different parts of the study area, gaining advantage in spatial coverage. Such an intuitive example demonstrates the elegant balance between relocation costs and sensing quality afforded by the proposed model and the proposed algorithm.

\item {\bf NB relocations:} Fig. \ref{figinbrouting} (c) shows the NB schedules in the M3 approach. Under multi-line scheduling, some NBs are relocated to different lines in coordination with IB scheduling. It can be verified from Figs. \ref{figinbrouting} (b) and (c) that the timetabled trips are fulfilled by IBs and NBs in M3. Fig. \ref{relocations} (b) highlights areas where relocation takes place. 
\end{enumerate}

	\begin{figure}[h!]
		\centering
		\makebox[\textwidth][c]{\includegraphics[width=1\textwidth]{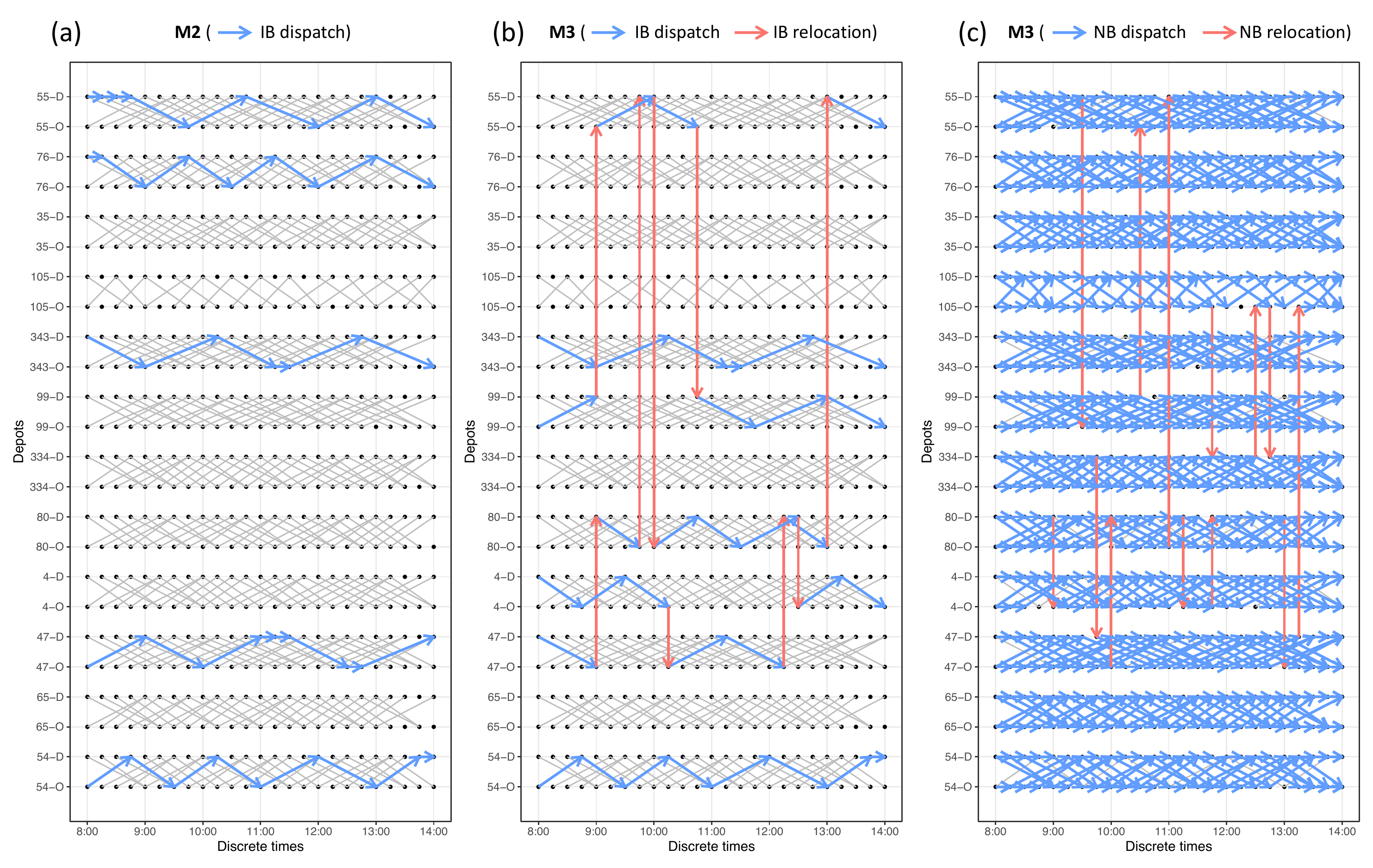}}
		\caption{Visualization of IB and NB schedules. (a) Schedules of 5 IBs under M2; (b) Schedules of 5 IBs under M3; (c) NB schedules under M3.}
		\label{figinbrouting}   
	\end{figure}

	\begin{figure}[h!]
		\centering
		\makebox[\textwidth][c]{\includegraphics[width=1\textwidth]{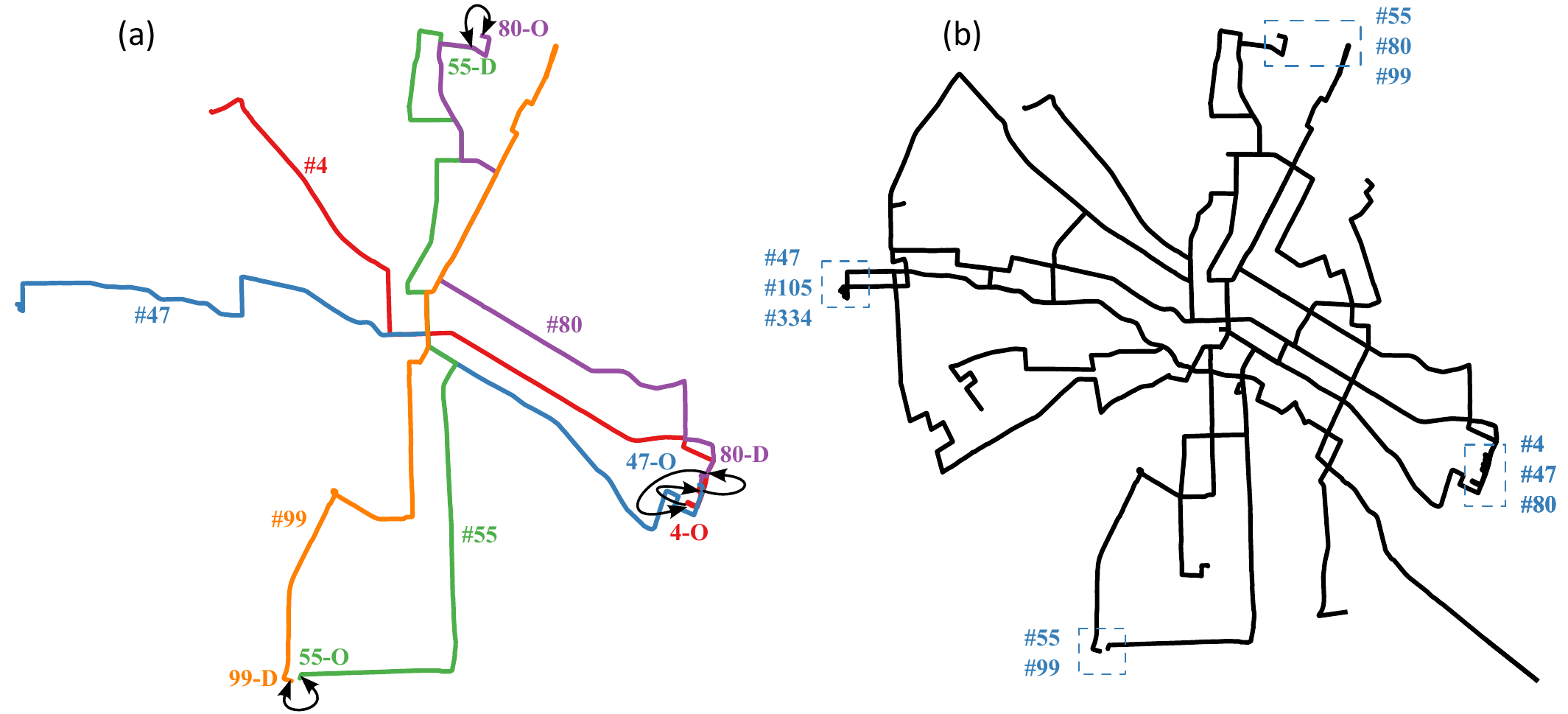}} 
		\caption{Spatial visualization of bus relocation. (a) IB relocations (dark arrows) among a few lines; (b) IB and NB relocations (dashed rectangles) among all the lines.}
		\label{relocations} 
	\end{figure}

	\subsubsection{Impact of active IB scheduling}\label{subsubimpact}

	Table \ref{tabAS} demonstrates the impact of active IB scheduling on the system performance, in terms of fixed cost (equivalent to number of buses required), operational cost (primarily related to bus relocations), total cost, grid coverage (\%), and sensing score. To differentiate the impact of IB fleet size, we report results for 5, 10, 15, and 20 IBs. 
	
	Firstly, the active single-line IB scheduling approach (M2) yields the same fixed and operational costs as M1. This is expected as M2 uses the same fleet as M1 without inter-line relocation. Nevertheless, through active single-line scheduling, the sensing score of M2 for 5, 10, 15 and 20 IBs increases by 20.5\%, 12.0\%, 13.8\%, and 12.2\%, respectively. We observe that the grid coverage percentage of M2 is not necessarily improved compared with M1, which is due to the pursuit of the sensing score, leaving some grids with low weights unvisited. Such a situation, if and when deemed undesirable, can be remedied by adjusting the weights, or by further suppressing the marginal gain in the effective sensing times.

	Secondly, the M3 approach (active multi-line bus scheduling)  reduces the fleet size by 2\% compared to M1 and M2, which confirms existing literature that show high bus utilization enabled by multi-line scheduling \citep{sayarshad2020optimizing}. The total (fixed+operational) cost of M3, despite the higher operational cost due to bus relocations, is roughly 1\% less than M1/M2. Moreover, M3 considerably improves the sensing quality over M1 and M2, because IBs can be relocated to cover grids of multiple lines, and the proposed batch scheduling heuristic guarantees that the IBs are well coordinated. 
	
	An important observation is that, as the IB fleet size increases, the advantage of M3 over M1 or M2 in terms of sensing score decreases. The reason is as follows. For a fixed temporal sensing granularity $\Delta_k$ and effective sensing time function $\hat f(\cdot)$, a bus line can be saturated with certain number of IBs, past which point more coverage would be redundant, which is a direct consequence of the diminishing marginal sensing gain. Therefore, higher number of IBs bring many bus lines to near-saturation, and sensor circulation enabled by M3 brings limited benefits. On the other hand, as the IB fleet size increases, the improvement of M2 over M1 remains relatively steady (around 12\%-13\%). This is because in both cases,  the IBs are restricted to single lines, which experience similar levels of saturation when the fleet size increases, and the 12\%-13\% increase is mainly attributed to intra-line scheduling.

	\begin{table}[h!]	
		\centering
		\caption{The effect of active bus scheduling on system performances ($\Delta_k=2$). }
		\resizebox{\textwidth}{!}{%
		\begin{threeparttable}
				\begin{tabular}{lllllllll}
					\toprule
					IB fleet  &  \multirow{2}{*}{Approach}   & Buses     & Operational  & Total  & Grid & Sensing
					\\ 
					size &   & required & cost* & cost & coverage & score $*$
					\\
					\midrule
					\multirow{3}{*}{5}  & M1  & 100      & 38724              & 124324 & 68\%       & 0.78   \\
					& M2  & 100      & 38724 ($+0.0\%$)   & 124324 ($+0.0\%$) & 68\%       & 0.94 (+20.5\%)   \\
					& M3  & 98       & 39248 ($+1.4\%$)   & 123136 ($-1.0\%$) & 81\%       & 1.03 (+32.1\%)  
					\\\hline
					\multirow{3}{*}{10} & M1   & 100     & 38724              & 124324 & 98\%       & 1.25   \\
					& M2   & 100     & 38724 ($+0.0\%$)   & 124324 ($+0.0\%$) & 94\%       & 1.40 (+12.0\%)  \\
					& M3   & 98      & 39191 ($+1.2\%$)   & 123079 ($-1.0\%$) & 98\%       & 1.50 (+20.0\%)  
					\\\hline
					\multirow{3}{*}{15} & M1   & 100     & 38724              & 124324 & 100\%              & 1.38   \\
					& M2   & 100     & 38724 ($+0.0\%$)   & 124324 ($+0.0\%$) & 98\%       & 1.57 (+13.8\%)  \\
					& M3   & 98      & 39251 ($+1.4\%$)   & 123139 ($-1.0\%$) & 100\%      & 1.65 (+19.6\%)  
					\\\hline
					\multirow{3}{*}{20} & M1   & 100     & 38724              & 124324 & 100\%              & 1.47   \\
					& M2   & 100     & 38724 ($+0.0\%$)   & 124324 ($+0.0\%$) & 100\%      & 1.65 (+12.2\%)  \\
					& M3   & 98      & 39048 ($+0.8\%$)   & 122936 ($-1.1\%$) & 100\%      & 1.71 (+16.3\%)  \\ 
					\bottomrule
			\end{tabular}
		\begin{tablenotes}
			\item Note: columns marked with $*$ present relative improvement over M1 in the parentheses.
		\end{tablenotes}
		\end{threeparttable}	 }
		\label{tabAS}    
	\end{table}

	Next, we provide a visualization of the spatial-temporal distribution of  sensing score in Fig. \ref{distributions}, for an IB fleet size of 5, a planning horizon of 6 hours, and temporal sensing granularity $\Delta_k=2$ (hrs). Each row corresponds to an approach (M1-M3), and each column corresponds to a two-hour sensing period. We can see that, with active scheduling (M1), the sensing resources are vastly underutilized (white or light red in most areas), rendering a 68\% grid coverage and sensing score of 0.78. When the IBs are actively scheduled in a single-line manner (M2), the coverage spreads more evenly over time, generating a higher sensing score of 0.94. However, area covered by M2 is limited to the 5 bus lines, amounting to 68\% of all grids. Finally, when IBs are actively scheduled over the network with multi-line relocation (M3), the 5 IBs can cover 81\% of all grids, and more grids have received the largest sensing score (1.732), raising the total to 1.03. These visualization results suggest that, when the IBs are relatively sparse, their active scheduling tends to have a positive impact on the sensing quality, and such impact grows when the IBs are circulated in the network.

	\begin{figure}[h!] 
		\centering
		\includegraphics[width=\textwidth]{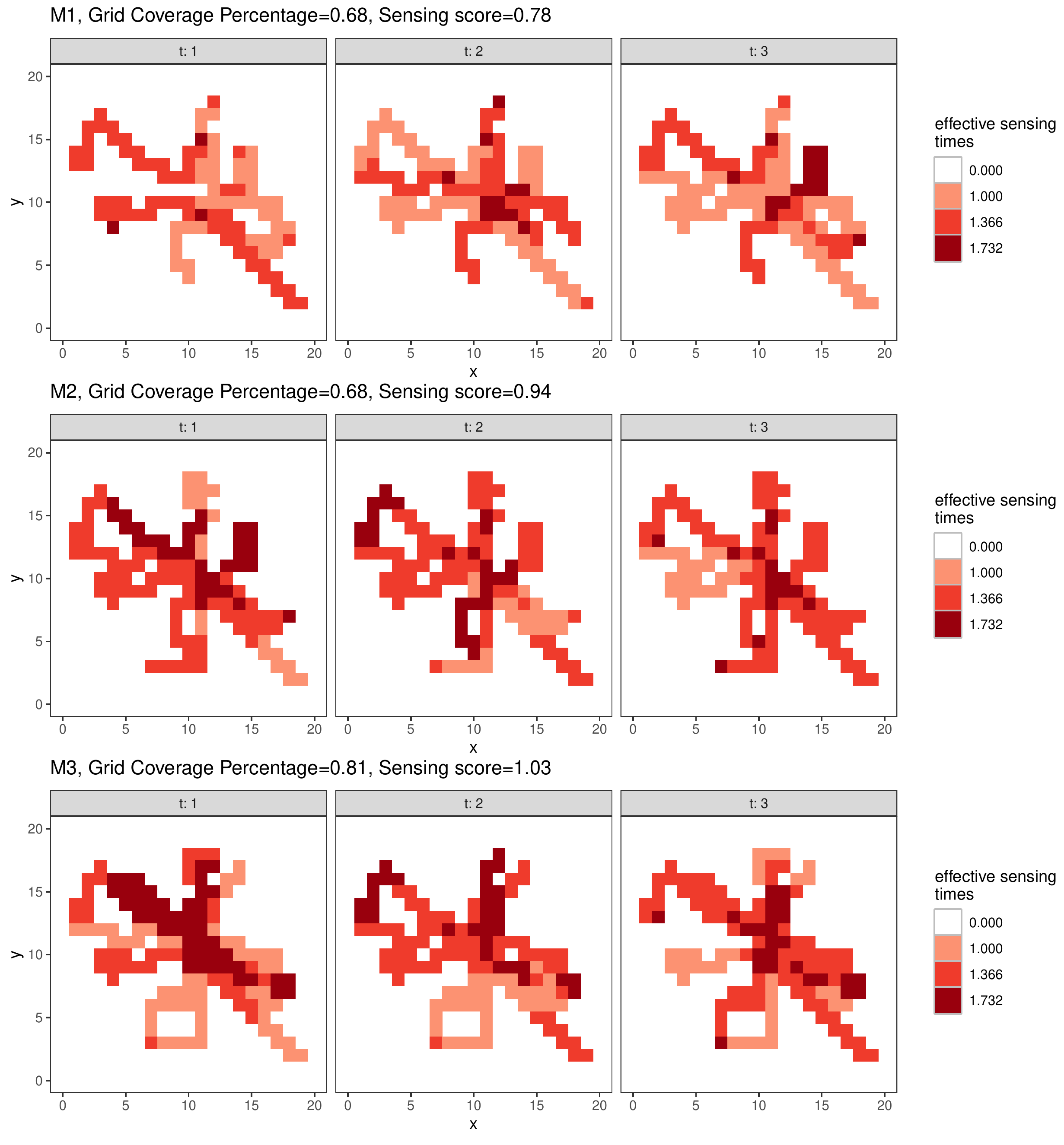}
		\caption{Spatial-temporal distributions of the sensing score for scheduling 5 IBs ($t=1,2,3$ represent the three sensing periods respectively)}
		\label{distributions}  
	\end{figure}

	\subsubsection{Impact of the temporal sensing granularity}\label{subsubsecimpact}
	
	We note that the temporal sensing granularity $\Delta_k$ plays a vital role not only in the quantitative assessment of the sensing quality, but also in determining the optimal bus schedules. To understand its impact on the sensing score, we invoke a test instance with 5 or 15 IBs,  a time horizon of 6 hrs, and $\Delta_k=1, 3, 6$ hrs, whose results are reported in Table \ref{tabML}. These three values of $\Delta_k$ represent decreasing importance of timeliness, or data requirement, on the sensing part. The following observations are made:
	\begin{itemize}
	\item Regardless of the IB fleet size, the advantage of M2 over M1 quickly diminishes as $\Delta_k$ increases. This is due to the fact that both M1 and M2 restrict IBs to fixed lines, and the role played by active scheduling (M2) becomes insignificant for large $\Delta_k$. 
	
	\item When the IB fleet size is small (5), the advantage of M3 over M2 grows with $\Delta_k$, because the larger time frame (e.g. 6 hrs) allows the IBs to be relocated to multiple lines, collecting higher sensing socres throughout the network, whereas lines equipped with IBs in M2 are saturated, irrespective of scheduling maneuvers. 
	
	\item On the contrary, when the IB fleet size is large (15), the advantage of M3 over M2 diminishes with increasing $\Delta_k$, because in this case each bus line is almost guaranteed one IB, which can collect the maximum effective sensing time (1.73) within 6 hrs, making multi-line scheduling unnecessary. 
	\end{itemize}

	\begin{table}[h!]	
		\caption{The impact of the temporal sensing granularity $\Delta_k$. }
		\resizebox{\textwidth}{!}{
		\begin{threeparttable}
				\begin{tabular}{llllllllll}
					\toprule
					IB fleet & \multirow{2}{*}{$\Delta_k$}   &  \multirow{2}{*}{Method}   & Buses     & Operational  & Total  & Grid & Sensing
					\\ 
					size & &   & required & cost* & cost & coverage & score $*$
					\\
					\midrule
					\multirow{9}{*}{5} & \multirow{3}{*}{1 hr}  & M1  & 100      & 38724              & 124324              & 68\%       & 0.48   \\
					&  & M2  & 100      & 38724 (+0.0\%)              & 124324 (+0.0\%)             & 65\%       & 0.63 (+31.3\%)   \\
					&  & M3  & 98      & 39168 ($+1.1\%$)   & 123056 ($-1.0\%$)    & 88\%       & 0.68 (+41.7\%)  
					\\\cline{2-8}
					& \multirow{3}{*}{3 hrs} & M1   & 100     & 38724              & 124324              & 68\%       & 0.97  \\
					&  &  M2   & 100     & 38724 (+0.0\%)             & 124324 (+0.0\%)             & 68\%       & 1.12 (+15.5\%)  \\
					&	& M3   & 98     & 39288 ($+1.5\%$)   & 123176 ($-0.9\%$)    & 98\%       & 1.31 (+35.1\%)  
					\\\cline{2-8}
					& \multirow{3}{*}{6 hrs} & M1   & 100     & 38724              & 124324              & 68\%       & 1.16   \\
					&  & M2   & 100     & 38724 (+0.0\%)             & 124324 (+0.0\%)             & 68\%       & 1.16 (+0.0\%)   \\
					&  	& M3   & 98     & 39248 ($+1.4\%$)   & 123136 ($-1.0\%$)    & 95\%       & 1.60 (+37.9\%)  \\
					\midrule
					\multirow{9}{*}{15} & \multirow{3}{*}{1 hr}  & M1  & 100      & 38724              & 124324              & 100\%       & 0.98   \\
					&  & M2  & 100      & 38724 (+0.0\%)             & 124324 (+0.0\%)             & 88\%       & 1.12 (+14.3\%)   \\
					&  & M3  & 98      & 39168 ($+1.1\%$)   & 123056 ($-1.0\%$)    & 100\%       & 1.32 (+34.7\%)  
					\\\cline{2-8}
					& \multirow{3}{*}{3 hrs} & M1   & 100     & 38724              & 124324              & 100\%       & 1.57   \\
					&  &  M2   & 100     & 38724 (+0.0\%)             & 124324 (+0.0\%)             & 100\%       & 1.68 (+7.0\%)  \\
					&	& M3   & 98     & 39048 ($+0.8\%$)   & 122936 ($-1.1\%$)    & 100\%       & 1.73 (+10.2\%)  
					\\\cline{2-8}
					& \multirow{3}{*}{6 hrs} & M1   & 100     & 38724              & 124324              & 100\%       & 1.73   \\
					&  & M2   & 100     & 38724 (+0.0\%)             & 124324 (+0.0\%)             & 100\%       & 1.73 (+0.0\%)  \\
					&  	& M3   & 98     & 39068 ($+0.9\%$)   & 122956 ($-1.1\%$)    & 100\%       & 1.73 (+0.0\%)  \\
					\bottomrule 
			\end{tabular} 
		\end{threeparttable}}
		\begin{tablenotes}
			\item Note: columns marked with $*$ present relative improvement over M1 in the parentheses.
		\end{tablenotes}	 		
		\label{tabML}  
	\end{table}

\subsubsection{Impact of bus relocation costs}\label{subsubsecimpactrc}

It is well understood that the overall sensing efficacy of the IB fleet depends considerably on inter-line relocation. Therefore, the relocation cost plays a vital role in the final outcome of the schedules, which we will investigate here with a sensitivity analysis. 

We consider the case with 6 bus lines, 5 instrumented buses, and a time horizon of 6 hours. We perturb the fixed bus relocation cost (20 RMB in all other experiments) from 0 to 500 (RMB), as shown in Fig. \ref{figrcsensitivity}, and observe the algorithm output. It can be seen that the overall model objective \eqref{FP1} increases with the relocation cost; in fact, the overall operational cost increases with relocation cost, and the sensing score decreases because bus relocations decline. Moreover, the sensing score curve is piecewise constant, meaning that the number of relocations remain constant for certain relocation costs, and the jump occurs when the relocation number changes. Indeed, when the relocation cost is beyond 150 RMB, no relocations exist in the solution. 

\begin{figure}[h!]
		\centering
	\includegraphics[width=\textwidth]{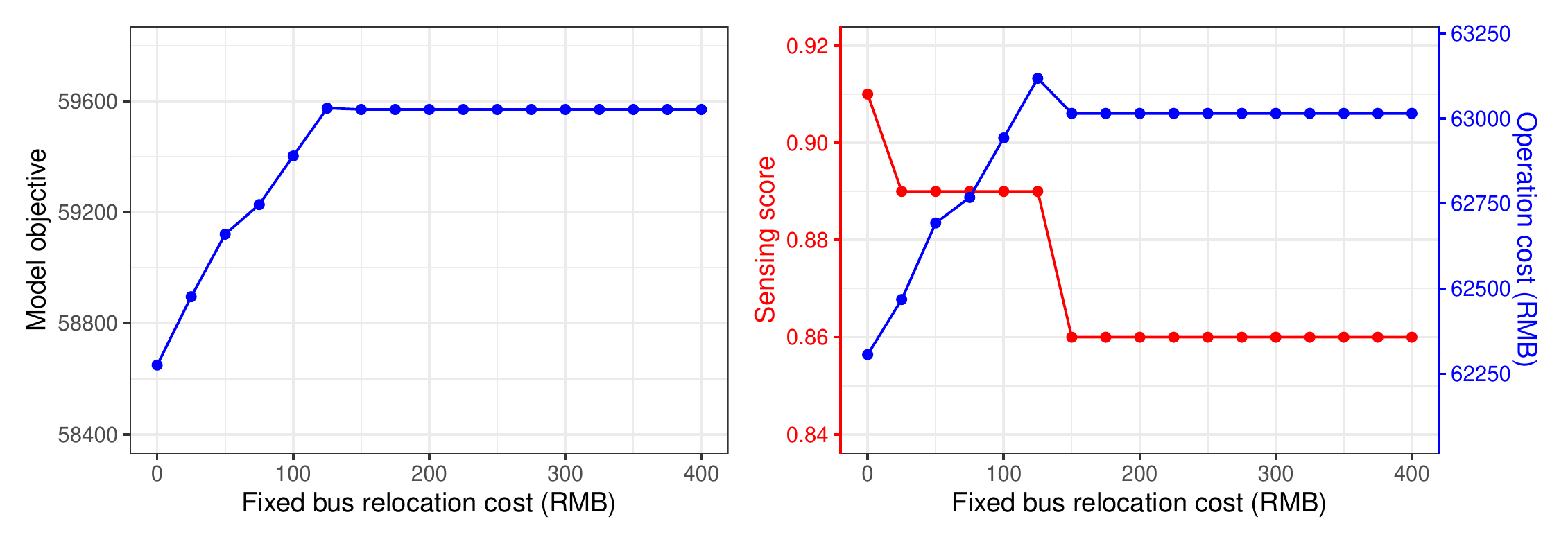}
		\caption{Sensitivity analysis of fixed relocation cost (the cost used in all the other experiments is 20 RMB).}
		\label{figrcsensitivity} 
	\end{figure}

	\subsection{Trade-off analysis of sensing quality and cost}\label{subsecPD}

	The parameter $\delta$ from Eqn \eqref{FP1} concerns with the importance of drive-by sensing quality relative to operational costs. To understand the response of the objectives to $\delta$, its values are populated ranging from 0 to 20000 with an increment of 250, followed by a series of optimizations \eqref{FP1}-\eqref{FP11}. Other settings in this experiment are: 20 IBs, a planning horizon of 6 hours and $\Delta_k=2$ (hrs). Fig. \ref{parameterDelta} shows the optimization results, from which we see that higher values of $\delta$ raise the sensing score because of the higher weights placed on the sensing quality, but this also increases the costs because more relocations are needed to mobilize the sensors. We also observe that, as the cost increases, the marginal gain of the sensing score reduces. This is because the sensing score of a timed grid is bounded and will stop growing once the grid's coverage is saturated with $\geq$ 3 times; see \eqref{example}. Finally, $\delta=4000$ achieves a satisfactory level of the sensing score, past which point the trade-off with the cost becomes quite inefficient. For this reason, we choose $\delta=4000$ in Sections \ref{subsecAP}-\ref{subsecCR}. 
	
		\begin{figure}[h!]
		\centering
		\makebox[\textwidth][c]{\includegraphics[width=0.7\textwidth]{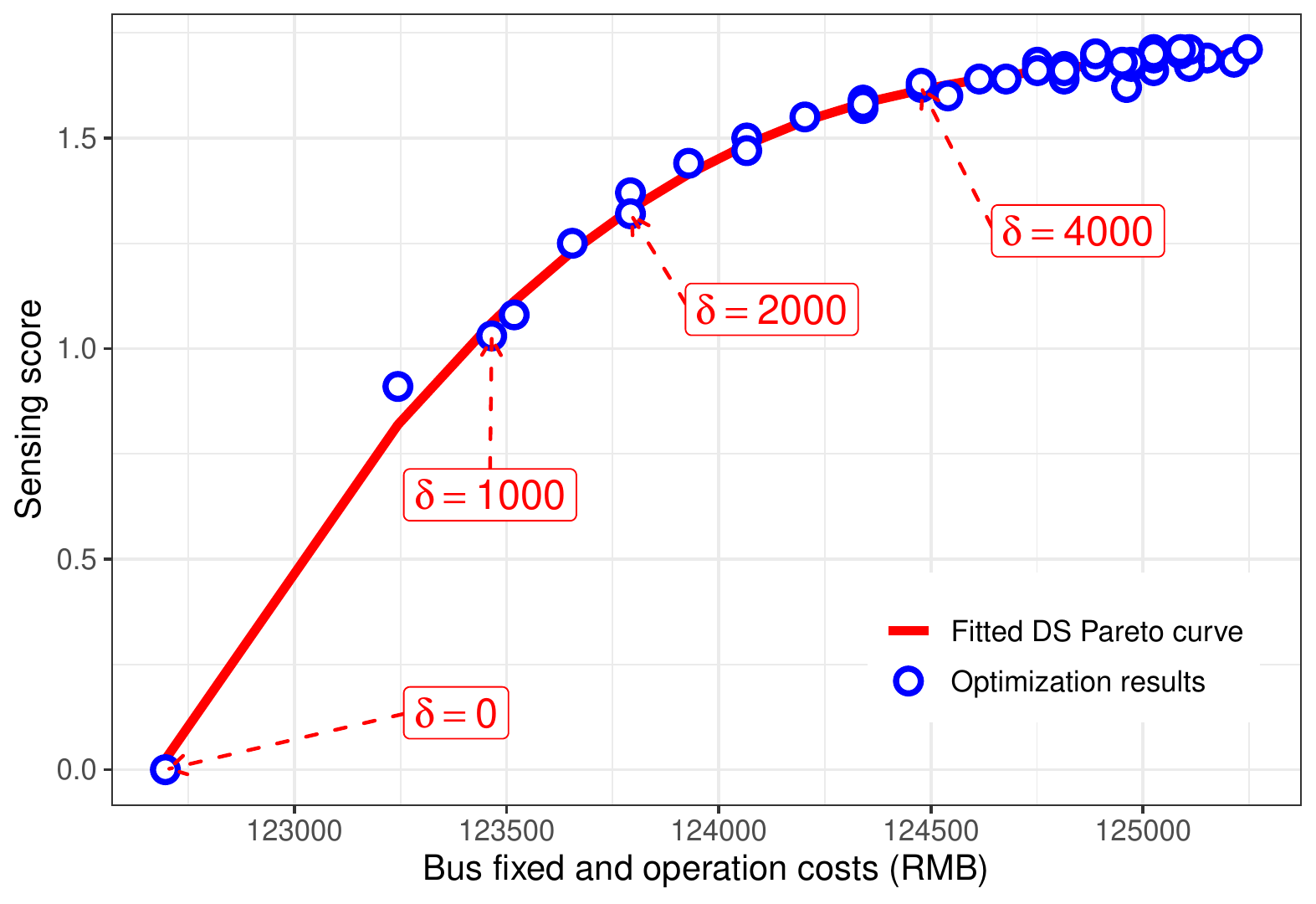}}%
		\caption{Trade-off between cost and sensing quality. Each blue circle represents one $\delta$ value; the red curve is the fitted Pareto efficiency.}
		\label{parameterDelta} 
	\end{figure}  
	
	Fig. \ref{parameterDelta} visualizes the trade-off between the sensing score and bus-related costs, when the number of IBs (sensors) is given. In Fig. \ref{figCBSTWEST}, we fix $\delta=4000$, and show the relationship between sensor investment (IB fleet size)\footnote{The sensor investment is expressed in terms of the IB fleet size as we naturally assume fixed unit cost for sensor procurement, installation and maintenance. In this way, the findings here are generalizable regardless of the type of the sensor.} and sensing score. This figure reflects the `return on investment' on part of the sensing agency, when different approaches are employed by the bus operator. We can also see the potential saving in sensor investment through active bus scheduling (M2/M3 vs. M1), by comparing the numbers of sensors (or IBs) required to achieve the same level of sensing score, as summarized in Table \ref{tabRoI}. It can be seen that to achieve satisfactory sensing quality (sensing score $\geq 1.0$), the maximum saving can be over 33.3\% compared to the benchmark M1 (i.e. optimal sensor deployment without active scheduling).

	\begin{figure}[h!]
		\centering
		\makebox[\textwidth][c]{\includegraphics[width=0.7\textwidth]{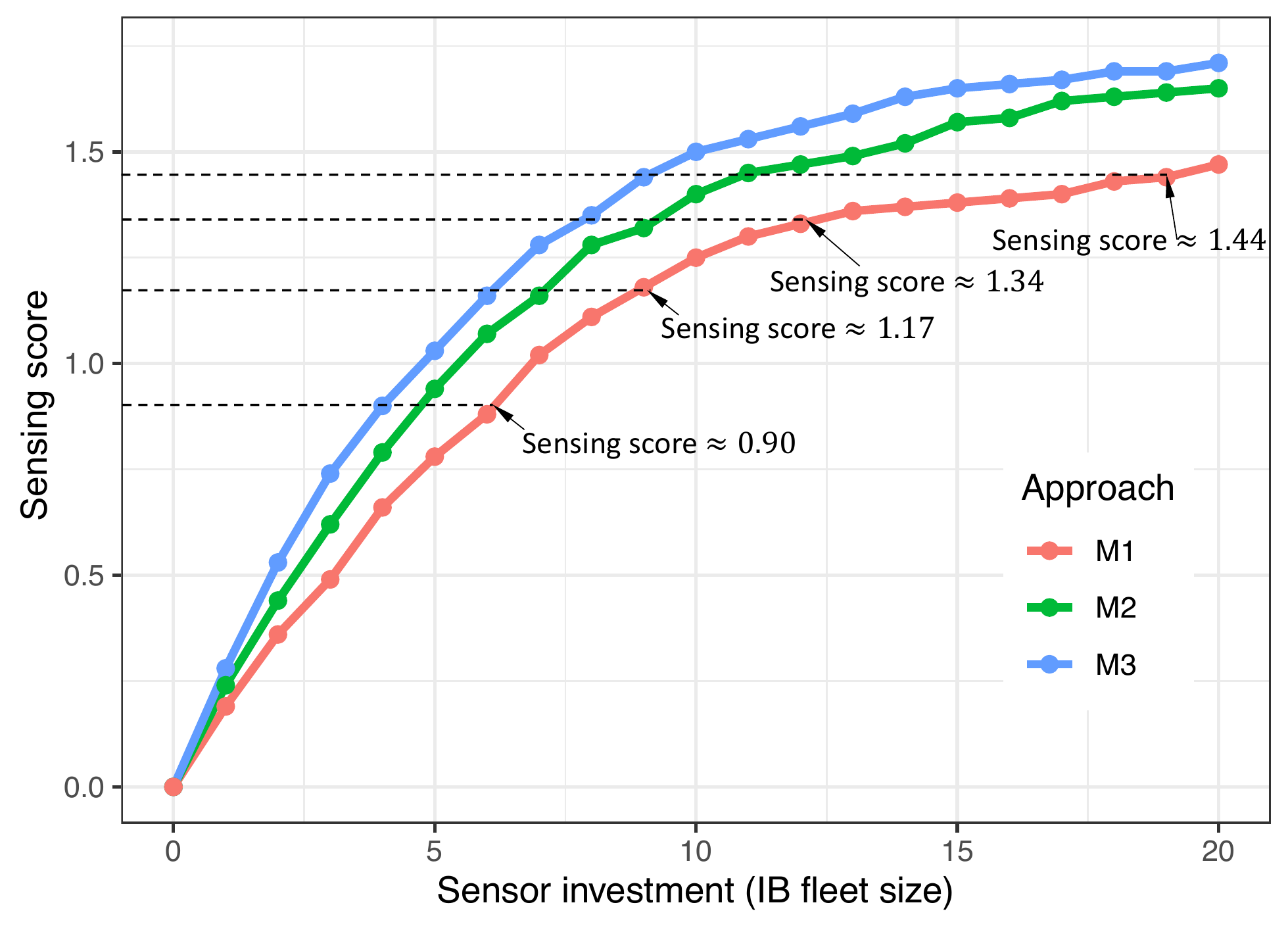}}%
		\caption{Trade-off between sensor investment and sensing quality.}
		\label{figCBSTWEST} 
	\end{figure}
	
		\begin{table}[h!]
	\caption{Number of sensors required to achieve a similar level of sensing score.}
	\centering
	\begin{tabular}{c|ccc|c}
	\hline
	Sensing score & M1 & M2 & M3 & Saving (M3 over M1)
	\\\hline
	0.90 & 6 & 5 & 4 & 33.3\%
	\\
	1.17 & 9 & 7 & 6 & 33.3\%
	\\
	1.34 & 12 & 9 & 8 & 33.3\%
	\\
	1.44 & 19 & 11 & 9 & 52.6\%
	\\
	\hline
	\end{tabular}
	\label{tabRoI}
	\end{table}

	\section{Conclusion}\label{secConclude}

	This paper explores the drive-by sensing power of bus fleets via multi-line scheduling, which advances state of the art from strategic sensor deployment to tactical/operational bus maneuvers. The underlying premise is to enable flexible and sensing-oriented circulation of instrumented buses in the network through a centralized optimization procedure. To ensure the operational feasibility of this approach, we propose a multi-line drive-by sensing bus assignment \& scheduling problem, which not only features bus service, wait, pull-out/in, and relocation activities, but also ensures that all timetabled trips are served by a mixed fleet of normal and instrumented buses. With such constraints, the problem aims to simultaneously maximize sensing utility, expressed as a weighted space-time effective sensing times (which we coin as `sensing score'), and minimize fixed and operational costs of buses. Two variations of this model are considered and compared: single-line scheduling and multi-line scheduling.

	The problem is formulated as a nonlinear integer programming model based on a time-expanded network. The model is linearized through piecewise linear approximation, followed by a batch scheduling heuristic that efficiently solve large-scale instances of the problem, which is benchmarked by Gurobi. Specific findings and recommendation for practice are as follows: 
	\begin{enumerate}
	    \item The proposed solution scheme yields good solution quality, with a gap less than 3.1\% from the lower bound generated by Gurobi, and perform more efficiently than Gurobi for large instances.
	    \item The schedules generated within the model produce several instances of relocations in the multi-line scenario, all with insignificant relocation distances and costs. The single-line scenario yields no intra-line relocations as they are highly cost-ineffective. 
	    \item Both single-line and multi-line scheduling (M2 \& M3) considerably improve the sensing quality compared to M1, by 12.0\%-20.5\% and 16.3\%-32.1\%, respectively. In M3, thanks to multi-line scheduling the fleet size is smaller than M1 and M2, rendering a lower total cost.
	    \item The advantage of M2 and M3 over M1: (1) declines with higher IB saturation rate as the marginal gain of active scheduling diminishes; (2) declines as the temporal sensing granularity $\Delta_k$ increases, except when the sensor saturation is low, in which case M3 dominates M2 and M1. Our recommendation regarding the use of active scheduling is qualitatively illustrated in Table \ref{tabrecomm}.
	    \item The trade-off between sensing objectives and bus operational costs is central to bus DS with active operational maneuvers. In case of multi-line scheduling with given number of sensors (IBs), prioritized sensing objective tends to increase operational costs due to more frequent relocation. On the other hand, from the perspective of the sensing agency, the `return on investment' of sensor deployment could benefit greatly from active bus scheduling, with potential budget savings of over 33\% in our case study. 
	\end{enumerate}
	
	\begin{table}[h!]
		    \caption{Overall recommendation for using active bus scheduling in DS tasks.}
	    \centering
	    \begin{tabular}{cc|c|c|c|}
	    \cline{3-5}
	  &  &  \multicolumn{3}{c|}{Temporal sensing granularity ($\Delta_k$)}
	    \\\cline{3-5}
	   &      & ~~~Small~~~ & ~Medium~ & ~Large ~
	         \\\hline
	   \multicolumn{1}{|c|}{\multirow{2}{*}{Sensor}} &   Low  & \checkmark\checkmark\checkmark & \checkmark\checkmark\checkmark & \checkmark\checkmark\checkmark
	       \\\cline{2-5}
	   \multicolumn{1}{|c|}{\multirow{2}{*}{saturation}} &   Medium & \checkmark\checkmark &\checkmark\checkmark &\checkmark\checkmark
	       \\\cline{2-5}
	   \multicolumn{1}{|c|}{\multirow{2}{*}{}} &   High & \checkmark\checkmark&\checkmark &
	       \\\hline
	    \end{tabular}
	    \label{tabrecomm}
	\end{table}

	In Table \ref{tabrecomm}, sensor saturation refers to the average number of sensors per bus line. Temporal sensing granularity $\Delta_k$ refers to the time interval during which the effective sensing times are calculated. Sensing applications with higher sampling frequency requirements (such as air quality, traffic conditions, parking availability) have low $\Delta_k$, whereas tasks such as road surface monitoring have large $\Delta_k$

    Future extensions of this work include: (1) solution algorithms for large-scale applications (over 100 lines); and (2) coordination with other types of host vehicles such as taxis or dedicated vehicles to further boost the sensing powers of the fleets.

    \section*{Acknowledgement}
    This work is supported by the National Natural Science Foundation of China through grant 72071163, the Natural Science Foundation of Sichuan through grants 2022NSFSC0474 and 2022NSFSC1909, and the Fundamental Research Funds for the Central Universities through grants 2682021CX054 and 2682021CX056.

\begin{appendices}		
		
\section{Sensitivity analysis on parameter $w$ within the batch scheduling algorithm}\label{appendixA}
		
In Section \ref{subsecAD}, Step 1 of the proposed algorithm introduces a weighting parameter $\omega$ to indicate the relative importance of drive-by sensing objective over trip coverage, which appears in Eqn \eqref{SA27}. Here, we perform a sensitivity analysis of $\omega$ to understand its impact on the objective. The test involves a fleet of 5, 10, 15 and 20 IBs over 12 bus routes for a planning horizon of 6 hours, while the NB fleet size is determined internally by the optimization procedure, such that all the timetabled trips are served.

The test range of $\omega$ is set as $[0.1,\,3.0]$ with an increment of 0.1. Fig. \ref{figW} shows the corresponding optimality gap between solutions from the proposed algorithm and Gurobi. First, we observe that the impact of $\omega$ on the optimality gap is non-monotone and non-regular, which may be caused by unobserved factors in the numerical procedure.  This also suggest non-dominant position of either objective over the other. Moreover, the overall gap decreases as the IB fleet size increases, which suggest that the heuristic generates good solutions as the sensor saturation is high.  We further see that, for all test cases the proposed heuristic yields good-quality solutions with gaps between $-0.5\%$ and $2\%$.

		\begin{figure}[h!]
			\centering
			\includegraphics[width=0.8\textwidth]{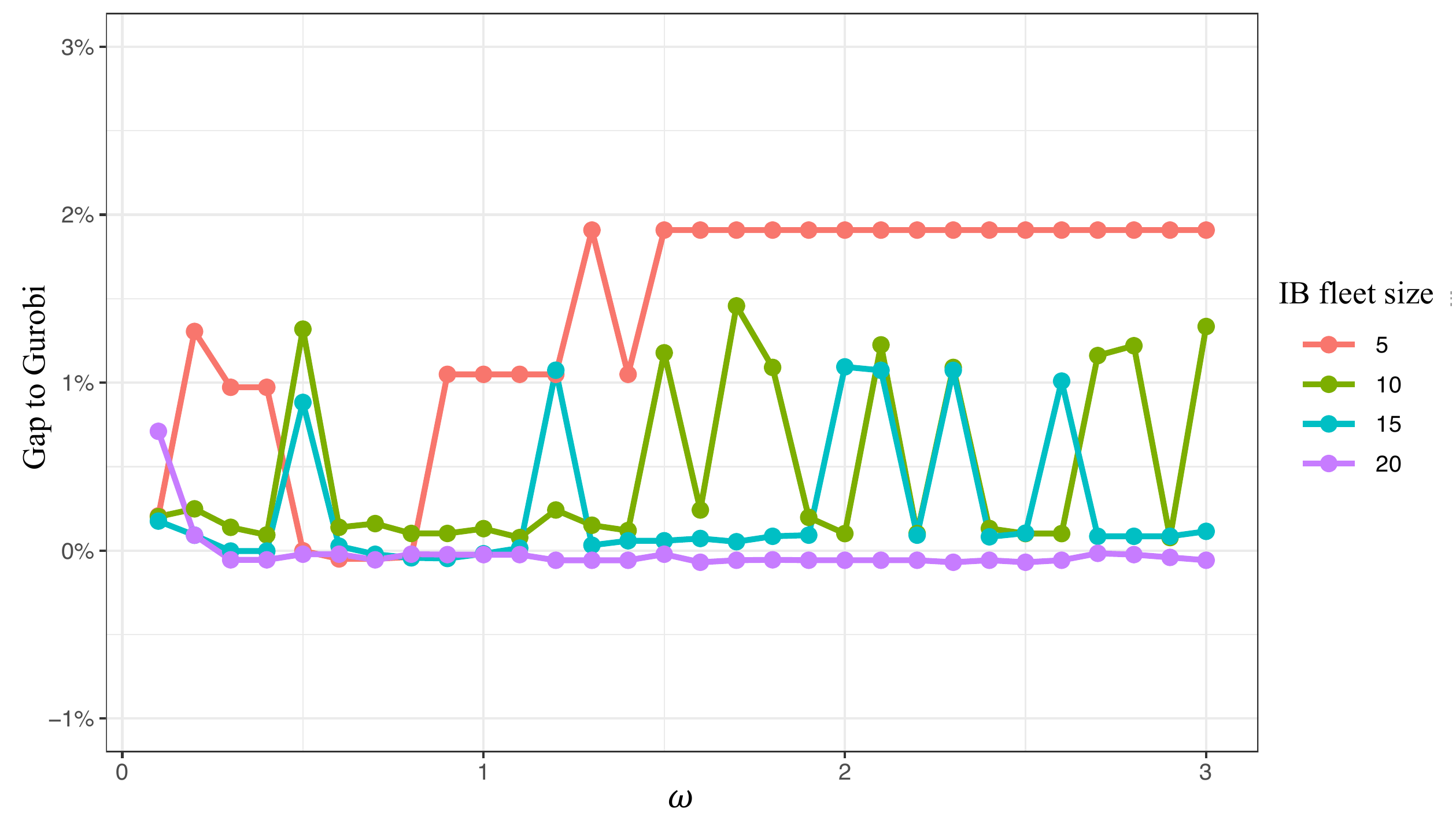}
			\caption{The effect of parameter $w$ on algorithm performance.}
			\label{figW}
		\end{figure}

		\section{A network with 20 bus lines.}\label{appendixB}
		\begin{figure}[h!]
			\centering
			\includegraphics[width=0.8\textwidth]{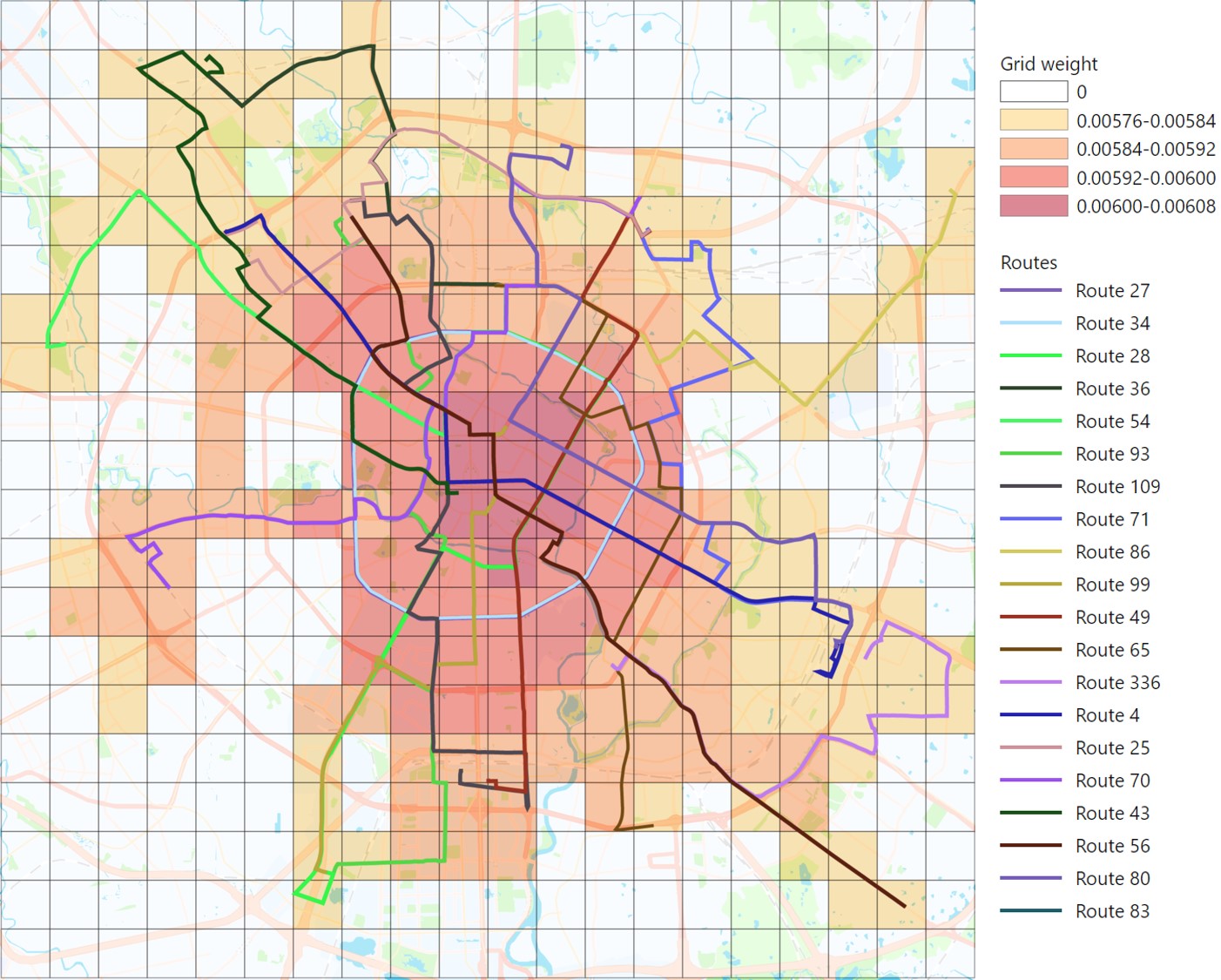}
			\caption{The study area with 20 bus lines and a mesh structure of 1 km $\times$ 1 km}
		\end{figure}

	\end{appendices}


\begin{thebibliography}{99}
\bibitem[{Ali and Dyo(2017)}]{ali2017coverage}
\bibinfo{author}{Ali, J.}, \bibinfo{author}{Dyo, V.}, \bibinfo{year}{2017}.
\newblock \bibinfo{title}{Coverage and mobile sensor placement for vehicles on
  predetermined routes: A greedy heuristic approach}, in:
  \bibinfo{booktitle}{14th International Joint Conference on e-Business and
  Telecommunications (ICETE 2017)}, \bibinfo{publisher}{SCITEPRESS--Science and
  Technology Publications}.
  
  
  \bibitem[{Anjomshoaa et~al.(2018)Anjomshoaa, Duarte, Rennings, Matarazzo,
  deSouza and Ratti}]{anjomshoaa2018city}
\bibinfo{author}{Anjomshoaa, A.}, \bibinfo{author}{Duarte, F.},
  \bibinfo{author}{Rennings, D.}, \bibinfo{author}{Matarazzo, T.J.},
  \bibinfo{author}{deSouza, P.}, \bibinfo{author}{Ratti, C.},
  \bibinfo{year}{2018}.
\newblock \bibinfo{title}{City scanner: Building and scheduling a mobile
  sensing platform for smart city services}.
\newblock \bibinfo{journal}{IEEE Internet of things Journal}
  \bibinfo{volume}{5}, \bibinfo{pages}{4567--4579}.
  
  
  \bibitem[{Asprone et~al.(2021)Asprone, Di~Martino, Festa and
  Starace}]{asprone2021vehicular}
\bibinfo{author}{Asprone, D.}, \bibinfo{author}{Di~Martino, S.},
  \bibinfo{author}{Festa, P.}, \bibinfo{author}{Starace, L.L.L.},
  \bibinfo{year}{2021}.
\newblock \bibinfo{title}{Vehicular crowd-sensing: a parametric routing
  algorithm to increase spatio-temporal road network coverage}.
\newblock \bibinfo{journal}{International Journal of Geographical Information
  Science} \bibinfo{volume}{35}, \bibinfo{pages}{1876--1904}.
  
  
  
  
  
  \bibitem[{Chen et~al.(2020)Chen, Xu, Han, Fu, Pi, Joe-Wong, Li, Zhang, Noh and
  Zhang}]{chen2020pas}
\bibinfo{author}{Chen, X.}, \bibinfo{author}{Xu, S.}, \bibinfo{author}{Han,
  J.}, \bibinfo{author}{Fu, H.}, \bibinfo{author}{Pi, X.},
  \bibinfo{author}{Joe-Wong, C.}, \bibinfo{author}{Li, Y.},
  \bibinfo{author}{Zhang, L.}, \bibinfo{author}{Noh, H.Y.},
  \bibinfo{author}{Zhang, P.}, \bibinfo{year}{2020}.
\newblock \bibinfo{title}{Pas: Prediction-based actuation system for city-scale
  ridesharing vehicular mobile crowdsensing}.
\newblock \bibinfo{journal}{IEEE Internet of Things Journal}
  \bibinfo{volume}{7}, \bibinfo{pages}{3719--3734}.
  
  
  
  \bibitem[{Chen et~al.(2017)Chen, Lv, Guo, Zhou and Xu}]{chen2017trajectory}
\bibinfo{author}{Chen, Y.}, \bibinfo{author}{Lv, P.}, \bibinfo{author}{Guo,
  D.}, \bibinfo{author}{Zhou, T.}, \bibinfo{author}{Xu, M.},
  \bibinfo{year}{2017}.
\newblock \bibinfo{title}{Trajectory segment selection with limited budget in
  mobile crowd sensing}.
\newblock \bibinfo{journal}{Pervasive and Mobile Computing}
  \bibinfo{volume}{40}, \bibinfo{pages}{123--138}.
  
  
  
  
  
  
  
  \bibitem[{Cruz et~al.(2020)Cruz, Couto, Costa, Fladenmuller and
  de~Amorim}]{cruz2020per}
\bibinfo{author}{Cruz, P.}, \bibinfo{author}{Couto, R.S.},
  \bibinfo{author}{Costa, L.H.M.}, \bibinfo{author}{Fladenmuller, A.},
  \bibinfo{author}{de~Amorim, M.D.}, \bibinfo{year}{2020}.
\newblock \bibinfo{title}{Per-vehicle coverage in a bus-based general-purpose
  sensor network}.
\newblock \bibinfo{journal}{IEEE Wireless Communications Letters}
  \bibinfo{volume}{9}, \bibinfo{pages}{1019--1022}.
  
  
  
  \bibitem[{Cruz~Caminha et~al.(2018)Cruz~Caminha, de~Souza~Couto, Maciel
  Kosmalski~Costa, Fladenmuller and Dias~de Amorim}]{cruz2018coverage}
\bibinfo{author}{Cruz~Caminha, P.H.}, \bibinfo{author}{de~Souza~Couto, R.},
  \bibinfo{author}{Maciel Kosmalski~Costa, L.H.},
  \bibinfo{author}{Fladenmuller, A.}, \bibinfo{author}{Dias~de Amorim, M.},
  \bibinfo{year}{2018}.
\newblock \bibinfo{title}{On the coverage of bus-based mobile sensing}.
\newblock \bibinfo{journal}{Sensors} \bibinfo{volume}{18},
  \bibinfo{pages}{1976}.
  
  
  
  
  \bibitem[{Dauer and Prata(2021)}]{dauer2021variable}
\bibinfo{author}{Dauer, A.T.}, \bibinfo{author}{Prata, B.d.A.},
  \bibinfo{year}{2021}.
\newblock \bibinfo{title}{Variable fixing heuristics for solving multiple depot
  vehicle scheduling problem with heterogeneous fleet and time windows}.
\newblock \bibinfo{journal}{Optimization Letters} \bibinfo{volume}{15},
  \bibinfo{pages}{153--170}.
  
  
  
  
  \bibitem[{Dong et~al.(2015)Dong, Guan, Chen, Guo and Gao}]{dong2015mosaic}
\bibinfo{author}{Dong, W.}, \bibinfo{author}{Guan, G.}, \bibinfo{author}{Chen,
  Y.}, \bibinfo{author}{Guo, K.}, \bibinfo{author}{Gao, Y.},
  \bibinfo{year}{2015}.
\newblock \bibinfo{title}{Mosaic: Towards city scale sensing with mobile sensor
  networks}, in: \bibinfo{booktitle}{2015 IEEE 21st International Conference on
  Parallel and Distributed Systems (ICPADS)}, \bibinfo{organization}{IEEE}. pp.
  \bibinfo{pages}{29--36}.
  
  
  \bibitem[{Du et~al.(2014)Du, Chen, Yang, Lu, Guan and Shen}]{du2014effective}
\bibinfo{author}{Du, R.}, \bibinfo{author}{Chen, C.}, \bibinfo{author}{Yang,
  B.}, \bibinfo{author}{Lu, N.}, \bibinfo{author}{Guan, X.},
  \bibinfo{author}{Shen, X.}, \bibinfo{year}{2014}.
\newblock \bibinfo{title}{Effective urban traffic monitoring by vehicular
  sensor networks}.
\newblock \bibinfo{journal}{IEEE transactions on Vehicular Technology}
  \bibinfo{volume}{64}, \bibinfo{pages}{273--286}.
  
  
  \bibitem[{Eriksson et~al.(2008)Eriksson, Girod, Hull, Newton, Madden and
  Balakrishnan}]{eriksson2008pothole}
\bibinfo{author}{Eriksson, J.}, \bibinfo{author}{Girod, L.},
  \bibinfo{author}{Hull, B.}, \bibinfo{author}{Newton, R.},
  \bibinfo{author}{Madden, S.}, \bibinfo{author}{Balakrishnan, H.},
  \bibinfo{year}{2008}.
\newblock \bibinfo{title}{The pothole patrol: using a mobile sensor network for
  road surface monitoring}, in: \bibinfo{booktitle}{Proceedings of the 6th
  international conference on Mobile systems, applications, and services}, pp.
  \bibinfo{pages}{29--39}.
  
  
  \bibitem[{Fan et~al.(2021)Fan, Zhao, Guo, Jin, Gan and Wang}]{fan2021towards}
\bibinfo{author}{Fan, G.}, \bibinfo{author}{Zhao, Y.}, \bibinfo{author}{Guo,
  Z.}, \bibinfo{author}{Jin, H.}, \bibinfo{author}{Gan, X.},
  \bibinfo{author}{Wang, X.}, \bibinfo{year}{2021}.
\newblock \bibinfo{title}{Towards fine-grained spatio-temporal coverage for
  vehicular urban sensing systems}, in: \bibinfo{booktitle}{IEEE INFOCOM
  2021-IEEE Conference on Computer Communications},
  \bibinfo{organization}{IEEE}. pp. \bibinfo{pages}{1--10}.
  
  
  
  \bibitem[{Gao et~al.(2016)Gao, Dong, Guo, Liu, Chen, Liu, Bu and
  Chen}]{gao2016mosaic}
\bibinfo{author}{Gao, Y.}, \bibinfo{author}{Dong, W.}, \bibinfo{author}{Guo,
  K.}, \bibinfo{author}{Liu, X.}, \bibinfo{author}{Chen, Y.},
  \bibinfo{author}{Liu, X.}, \bibinfo{author}{Bu, J.}, \bibinfo{author}{Chen,
  C.}, \bibinfo{year}{2016}.
\newblock \bibinfo{title}{Mosaic: A low-cost mobile sensing system for urban
  air quality monitoring}, in: \bibinfo{booktitle}{IEEE INFOCOM 2016-The 35th
  Annual IEEE International Conference on Computer Communications},
  \bibinfo{organization}{IEEE}. pp. \bibinfo{pages}{1--9}.
  
  
  
  \bibitem[{Gryech et~al.(2020)Gryech, Ben-Aboud, Guermah, Sbihi, Ghogho and
  Kobbane}]{gryech2020moreair}
\bibinfo{author}{Gryech, I.}, \bibinfo{author}{Ben-Aboud, Y.},
  \bibinfo{author}{Guermah, B.}, \bibinfo{author}{Sbihi, N.},
  \bibinfo{author}{Ghogho, M.}, \bibinfo{author}{Kobbane, A.},
  \bibinfo{year}{2020}.
\newblock \bibinfo{title}{Moreair: a low-cost urban air pollution monitoring
  system}.
\newblock \bibinfo{journal}{Sensors} \bibinfo{volume}{20},
  \bibinfo{pages}{998}.
  
  
  
  \bibitem[{Ibarra-Rojas et~al.(2014)Ibarra-Rojas, Giesen and
  Rios-Solis}]{ibarra2014integrated}
\bibinfo{author}{Ibarra-Rojas, O.J.}, \bibinfo{author}{Giesen, R.},
  \bibinfo{author}{Rios-Solis, Y.A.}, \bibinfo{year}{2014}.
\newblock \bibinfo{title}{An integrated approach for timetabling and vehicle
  scheduling problems to analyze the trade-off between level of service and
  operating costs of transit networks}.
\newblock \bibinfo{journal}{Transportation Research Part B: Methodological}
  \bibinfo{volume}{70}, \bibinfo{pages}{35--46}.
  
  
  \bibitem[{James et~al.(2012)James, Li and Lam}]{james2012sensor}
\bibinfo{author}{James, J.}, \bibinfo{author}{Li, V.O.}, \bibinfo{author}{Lam,
  A.Y.}, \bibinfo{year}{2012}.
\newblock \bibinfo{title}{Sensor deployment for air pollution monitoring using
  public transportation system}, in: \bibinfo{booktitle}{2012 IEEE Congress on
  Evolutionary Computation}, \bibinfo{organization}{IEEE}. pp.
  \bibinfo{pages}{1--7}.
  
  
  \bibitem[{Kaivonen and Ngai(2020)}]{kaivonen2020real}
\bibinfo{author}{Kaivonen, S.}, \bibinfo{author}{Ngai, E.C.H.},
  \bibinfo{year}{2020}.
\newblock \bibinfo{title}{Real-time air pollution monitoring with sensors on
  city bus}.
\newblock \bibinfo{journal}{Digital Communications and Networks}
  \bibinfo{volume}{6}, \bibinfo{pages}{23--30}.
  
  
  
  \bibitem[{Lane et~al.(2008)Lane, Eisenman, Musolesi, Miluzzo and
  Campbell}]{lane2008urban}
\bibinfo{author}{Lane, N.D.}, \bibinfo{author}{Eisenman, S.B.},
  \bibinfo{author}{Musolesi, M.}, \bibinfo{author}{Miluzzo, E.},
  \bibinfo{author}{Campbell, A.T.}, \bibinfo{year}{2008}.
\newblock \bibinfo{title}{Urban sensing systems: opportunistic or
  participatory?}, in: \bibinfo{booktitle}{Proceedings of the 9th workshop on
  Mobile computing systems and applications}, pp. \bibinfo{pages}{11--16}.
  
  
  
  \bibitem[{Lee and Gerla(2010)}]{lee2010survey}
\bibinfo{author}{Lee, U.}, \bibinfo{author}{Gerla, M.}, \bibinfo{year}{2010}.
\newblock \bibinfo{title}{A survey of urban vehicular sensing platforms}.
\newblock \bibinfo{journal}{Computer Networks} \bibinfo{volume}{54},
  \bibinfo{pages}{527--544}.
  
  
  \bibitem[{Li(2014)}]{li2014transit}
\bibinfo{author}{Li, J.Q.}, \bibinfo{year}{2014}.
\newblock \bibinfo{title}{Transit bus scheduling with limited energy}.
\newblock \bibinfo{journal}{Transportation Science} \bibinfo{volume}{48},
  \bibinfo{pages}{521--539}.
  
  
  \bibitem[{Li et~al.(2021)Li, Lo, Huang and Xiao}]{li2021mixed}
\bibinfo{author}{Li, L.}, \bibinfo{author}{Lo, H.K.}, \bibinfo{author}{Huang,
  W.}, \bibinfo{author}{Xiao, F.}, \bibinfo{year}{2021}.
\newblock \bibinfo{title}{Mixed bus fleet location-routing-scheduling under
  range uncertainty}.
\newblock \bibinfo{journal}{Transportation Research Part B: Methodological}
  \bibinfo{volume}{146}, \bibinfo{pages}{155--179}.
  
  
  \bibitem[{Li et~al.(2008)Li, Shu, Li, Huang, Luo and Wu}]{li2008performance}
\bibinfo{author}{Li, X.}, \bibinfo{author}{Shu, W.}, \bibinfo{author}{Li, M.},
  \bibinfo{author}{Huang, H.Y.}, \bibinfo{author}{Luo, P.E.},
  \bibinfo{author}{Wu, M.Y.}, \bibinfo{year}{2008}.
\newblock \bibinfo{title}{Performance evaluation of vehicle-based mobile sensor
  networks for traffic monitoring}.
\newblock \bibinfo{journal}{IEEE transactions on vehicular technology}
  \bibinfo{volume}{58}, \bibinfo{pages}{1647--1653}.
  
  
  
  
  \bibitem[{Liu et~al.(2005)Liu, Brass, Dousse, Nain and
  Towsley}]{liu2005mobility}
\bibinfo{author}{Liu, B.}, \bibinfo{author}{Brass, P.},
  \bibinfo{author}{Dousse, O.}, \bibinfo{author}{Nain, P.},
  \bibinfo{author}{Towsley, D.}, \bibinfo{year}{2005}.
\newblock \bibinfo{title}{Mobility improves coverage of sensor networks}, in:
  \bibinfo{booktitle}{Proceedings of the 6th ACM international symposium on
  Mobile ad hoc networking and computing}, pp. \bibinfo{pages}{300--308}.
  
  
  \bibitem[{Lu et~al.(2022)Lu, Diabat, Li and Yang}]{lu2022combined}
\bibinfo{author}{Lu, C.C.}, \bibinfo{author}{Diabat, A.}, \bibinfo{author}{Li,
  Y.T.}, \bibinfo{author}{Yang, Y.M.}, \bibinfo{year}{2022}.
\newblock \bibinfo{title}{Combined passenger and parcel transportation using a
  mixed fleet of electric and gasoline vehicles}.
\newblock \bibinfo{journal}{Transportation Research Part E: Logistics and
  Transportation Review} \bibinfo{volume}{157}, \bibinfo{pages}{102546}.
  
  
  
  \bibitem[{Ma et~al.(2015)Ma, Zhao and Yuan}]{ma2015opportunities}
\bibinfo{author}{Ma, H.}, \bibinfo{author}{Zhao, D.}, \bibinfo{author}{Yuan,
  P.}, \bibinfo{year}{2015}.
\newblock \bibinfo{title}{Opportunities in mobile crowd sensing}.
\newblock \bibinfo{journal}{Infocommunications Journal} \bibinfo{volume}{7},
  \bibinfo{pages}{32--38}.
  
  
  \bibitem[{Ma et~al.(2008)Ma, Richards, Ghanem, Guo and Hassard}]{ma2008air}
\bibinfo{author}{Ma, Y.}, \bibinfo{author}{Richards, M.},
  \bibinfo{author}{Ghanem, M.}, \bibinfo{author}{Guo, Y.},
  \bibinfo{author}{Hassard, J.}, \bibinfo{year}{2008}.
\newblock \bibinfo{title}{Air pollution monitoring and mining based on sensor
  grid in london}.
\newblock \bibinfo{journal}{Sensors} \bibinfo{volume}{8},
  \bibinfo{pages}{3601--3623}.
  
  
  
  \bibitem[{Marjovi et~al.(2015)Marjovi, Arfire and Martinoli}]{marjovi2015high}
\bibinfo{author}{Marjovi, A.}, \bibinfo{author}{Arfire, A.},
  \bibinfo{author}{Martinoli, A.}, \bibinfo{year}{2015}.
\newblock \bibinfo{title}{High resolution air pollution maps in urban
  environments using mobile sensor networks}, in: \bibinfo{booktitle}{2015
  International Conference on Distributed Computing in Sensor Systems},
  \bibinfo{organization}{IEEE}. pp. \bibinfo{pages}{11--20}.
  
  
  \bibitem[{Messier et~al.(2018)Messier, Chambliss, Gani, Alvarez, Brauer, Choi,
  Hamburg, Kerckhoffs, LaFranchi, Lunden et~al.}]{messier2018mapping}
\bibinfo{author}{Messier, K.P.}, \bibinfo{author}{Chambliss, S.E.},
  \bibinfo{author}{Gani, S.}, \bibinfo{author}{Alvarez, R.},
  \bibinfo{author}{Brauer, M.}, \bibinfo{author}{Choi, J.J.},
  \bibinfo{author}{Hamburg, S.P.}, \bibinfo{author}{Kerckhoffs, J.},
  \bibinfo{author}{LaFranchi, B.}, \bibinfo{author}{Lunden, M.M.}, et~al.,
  \bibinfo{year}{2018}.
\newblock \bibinfo{title}{Mapping air pollution with google street view cars:
  Efficient approaches with mobile monitoring and land use regression}.
\newblock \bibinfo{journal}{Environmental science \& technology}
  \bibinfo{volume}{52}, \bibinfo{pages}{12563--12572}.
  
  
  
  
  \bibitem[{Moawad et~al.(2021)Moawad, Li, Pancorbo, Gurumurthy, Freyermuth,
  Islam, Vijayagopal, Stinson and Rousseau}]{moawad2021real}
\bibinfo{author}{Moawad, A.}, \bibinfo{author}{Li, Z.},
  \bibinfo{author}{Pancorbo, I.}, \bibinfo{author}{Gurumurthy, K.M.},
  \bibinfo{author}{Freyermuth, V.}, \bibinfo{author}{Islam, E.},
  \bibinfo{author}{Vijayagopal, R.}, \bibinfo{author}{Stinson, M.},
  \bibinfo{author}{Rousseau, A.}, \bibinfo{year}{2021}.
\newblock \bibinfo{title}{A real-time energy and cost efficient vehicle route
  assignment neural recommender system}.
\newblock \bibinfo{journal}{arXiv preprint arXiv:2110.10887} .


\bibitem[{O’Keeffe et~al.(2019)O’Keeffe, Anjomshoaa, Strogatz, Santi and
  Ratti}]{o2019quantifying}
\bibinfo{author}{O’Keeffe, K.P.}, \bibinfo{author}{Anjomshoaa, A.},
  \bibinfo{author}{Strogatz, S.H.}, \bibinfo{author}{Santi, P.},
  \bibinfo{author}{Ratti, C.}, \bibinfo{year}{2019}.
\newblock \bibinfo{title}{Quantifying the sensing power of vehicle fleets}.
\newblock \bibinfo{journal}{Proceedings of the National Academy of Sciences}
  \bibinfo{volume}{116}, \bibinfo{pages}{12752--12757}.
  
  \bibitem[{Pacheco et~al.(2013)Pacheco, Caballero, Laguna and
  Molina}]{pacheco2013bi}
\bibinfo{author}{Pacheco, J.}, \bibinfo{author}{Caballero, R.},
  \bibinfo{author}{Laguna, M.}, \bibinfo{author}{Molina, J.},
  \bibinfo{year}{2013}.
\newblock \bibinfo{title}{Bi-objective bus routing: an application to school
  buses in rural areas}.
\newblock \bibinfo{journal}{Transportation Science} \bibinfo{volume}{47},
  \bibinfo{pages}{397--411}.
  
  
  \bibitem[{Rifki et~al.(2020)Rifki, Chiabaut and Solnon}]{rifki2020impact}
\bibinfo{author}{Rifki, O.}, \bibinfo{author}{Chiabaut, N.},
  \bibinfo{author}{Solnon, C.}, \bibinfo{year}{2020}.
\newblock \bibinfo{title}{On the impact of spatio-temporal granularity of
  traffic conditions on the quality of pickup and delivery optimal tours}.
\newblock \bibinfo{journal}{Transportation Research Part E: Logistics and
  Transportation Review} \bibinfo{volume}{142}, \bibinfo{pages}{102085}.
  
  
  
  \bibitem[{Sayarshad and Gao(2020)}]{sayarshad2020optimizing}
\bibinfo{author}{Sayarshad, H.R.}, \bibinfo{author}{Gao, H.O.},
  \bibinfo{year}{2020}.
\newblock \bibinfo{title}{Optimizing dynamic switching between fixed and
  flexible transit services with an idle-vehicle relocation strategy and
  reductions in emissions}.
\newblock \bibinfo{journal}{Transportation Research Part A: Policy and
  Practice} \bibinfo{volume}{135}, \bibinfo{pages}{198--214}.
  
  
  
  \bibitem[{Song et~al.(2020)Song, Han and Stettler}]{song2020deep}
\bibinfo{author}{Song, J.}, \bibinfo{author}{Han, K.},
  \bibinfo{author}{Stettler, M.E.}, \bibinfo{year}{2020}.
\newblock \bibinfo{title}{Deep-maps: Machine-learning-based mobile air
  pollution sensing}.
\newblock \bibinfo{journal}{IEEE Internet of Things Journal}
  \bibinfo{volume}{8}, \bibinfo{pages}{7649--7660}.
  
  
  
  \bibitem[{Tonekaboni et~al.(2020)Tonekaboni, Ramaswamy, Mishra, Setayeshfar and
  Omidvar}]{tonekaboni2020spatio}
\bibinfo{author}{Tonekaboni, N.H.}, \bibinfo{author}{Ramaswamy, L.},
  \bibinfo{author}{Mishra, D.}, \bibinfo{author}{Setayeshfar, O.},
  \bibinfo{author}{Omidvar, S.}, \bibinfo{year}{2020}.
\newblock \bibinfo{title}{Spatio-temporal coverage enhancement in drive-by
  sensing through utility-aware mobile agent selection}, in:
  \bibinfo{booktitle}{International Conference on Internet of Things},
  \bibinfo{organization}{Springer}. pp. \bibinfo{pages}{108--124}.
  
  
  
  \bibitem[{Wang et~al.(2018)Wang, Li, Qin, Wang and Li}]{wang2018maximizing}
\bibinfo{author}{Wang, C.}, \bibinfo{author}{Li, C.}, \bibinfo{author}{Qin,
  C.}, \bibinfo{author}{Wang, W.}, \bibinfo{author}{Li, X.},
  \bibinfo{year}{2018}.
\newblock \bibinfo{title}{Maximizing spatial--temporal coverage in mobile
  crowd-sensing based on public transports with predictable trajectory}.
\newblock \bibinfo{journal}{International Journal of Distributed Sensor
  Networks} \bibinfo{volume}{14}, \bibinfo{pages}{1550147718795351}.
  
  
  
  \bibitem[{Wang et~al.(2014)Wang, Birken and Shamsabadi}]{wang2014framework}
\bibinfo{author}{Wang, M.}, \bibinfo{author}{Birken, R.},
  \bibinfo{author}{Shamsabadi, S.S.}, \bibinfo{year}{2014}.
\newblock \bibinfo{title}{Framework and implementation of a continuous
  network-wide health monitoring system for roadways}, in:
  \bibinfo{booktitle}{Nondestructive Characterization for Composite Materials,
  Aerospace Engineering, Civil Infrastructure, and Homeland Security 2014},
  \bibinfo{organization}{SPIE}. pp. \bibinfo{pages}{107--118}.
  
  
  
  \bibitem[{Wang et~al.(2021)Wang, Wu, Wang, Zhen and Qu}]{wang2021emergency}
\bibinfo{author}{Wang, W.}, \bibinfo{author}{Wu, S.}, \bibinfo{author}{Wang,
  S.}, \bibinfo{author}{Zhen, L.}, \bibinfo{author}{Qu, X.},
  \bibinfo{year}{2021}.
\newblock \bibinfo{title}{Emergency facility location problems in logistics:
  Status and perspectives}.
\newblock \bibinfo{journal}{Transportation research part E: logistics and
  transportation review} \bibinfo{volume}{154}, \bibinfo{pages}{102465}.
  
  
  \bibitem[{Wu et~al.(2022)Wu, Lin, Liu and Jin}]{wu2022multi}
\bibinfo{author}{Wu, W.}, \bibinfo{author}{Lin, Y.}, \bibinfo{author}{Liu, R.},
  \bibinfo{author}{Jin, W.}, \bibinfo{year}{2022}.
\newblock \bibinfo{title}{The multi-depot electric vehicle scheduling problem
  with power grid characteristics}.
\newblock \bibinfo{journal}{Transportation Research Part B: Methodological}
  \bibinfo{volume}{155}, \bibinfo{pages}{322--347}.
  
  
  \bibitem[{Xu et~al.(2019)Xu, Chen, Pi, Joe-Wong, Zhang and Noh}]{xu2019ilocus}
\bibinfo{author}{Xu, S.}, \bibinfo{author}{Chen, X.}, \bibinfo{author}{Pi, X.},
  \bibinfo{author}{Joe-Wong, C.}, \bibinfo{author}{Zhang, P.},
  \bibinfo{author}{Noh, H.Y.}, \bibinfo{year}{2019}.
\newblock \bibinfo{title}{ilocus: Incentivizing vehicle mobility to optimize
  sensing distribution in crowd sensing}.
\newblock \bibinfo{journal}{IEEE Transactions on Mobile Computing}
  \bibinfo{volume}{19}, \bibinfo{pages}{1831--1847}.
  
  
  \bibitem[{Zhou et~al.(2020)Zhou, Liu, Wei and Golub}]{zhou2020bi}
\bibinfo{author}{Zhou, Y.}, \bibinfo{author}{Liu, X.C.}, \bibinfo{author}{Wei,
  R.}, \bibinfo{author}{Golub, A.}, \bibinfo{year}{2020}.
\newblock \bibinfo{title}{Bi-objective optimization for battery electric bus
  deployment considering cost and environmental equity}.
\newblock \bibinfo{journal}{IEEE Transactions on Intelligent Transportation
  Systems} \bibinfo{volume}{22}, \bibinfo{pages}{2487--2497}.
  
    
  
  
\end{thebibliography}

\end{document}